\newlength{\guillotine}
\newtheorem{thm}{Theorem}[section]
\newtheorem{lemma}[thm]{Lemma}
\newtheorem{prop}[thm]{Proposition}
\newtheorem{definition}[thm]{Definition}
\newtheorem{example}[thm]{Example}
\theoremstyle{remark}
\newtheorem{rem}[thm]{Remark}
\begin{document}
%\title{xxxx}

\title{Volume growth for infinite graphs and translation surfaces}

\author{P. Colognese and M. Pollicott\thanks{Department of Mathematics, Warwick University, Coventry, CV4 7AL, UK}}
\date{
   Warwick University\\[2ex]
    \today
}

\maketitle

\section{Introduction}

We begin by recalling the  definition of volume entropy
 for compact Riemannian manifolds due to Manning \cite{manning}.
Let $M$ be a compact  manifold 
with Riemannian metric $\rho$ and universal cover 
$\widetilde M$ equipped with the lifted metric $\widetilde \rho$.  
  Fix a point $ c \in \widetilde M$ and consider a ball
  $B(c, R)$  
   of radius $R>0$  centred at
  $ c$.

\begin{definition}\label{entropydef}
The {\it volume entropy} of $M$ is defined by 
$$
h = h(M) := \lim_{R \to \infty} \frac{1}{R} \log
\hbox{\rm Vol}_{\tilde{\rho}}(B( c, R)),
$$
where $\hbox{\rm Vol}_{\tilde{\rho}}$ denotes the Riemannian volume on $\widetilde M$ with respect to $\widetilde \rho$.
\end{definition}

%Intuitively this definition captures the exponential growth of the number of orbits of the geodesic %flow on the manifold. 
For manifolds $(M, \rho)$ of non-positive curvature this  coincides with the topological entropy $h$ of the associated geodesic flow \cite{manning}.
%Moreover, we can replace $\hbox{\rm Vol}_\rho(B(\widetilde c, R))$ by the circumference $L(R)$ or  a circle of radius $R$ (projected onto the surface).
In the case of manifolds with negative sectional curvature,  Margulis \cite{margulis} showed in his thesis that there is a simple asymptotic formula:  There exists $C > 0$ such that
$$
\lim_{R \to +\infty} \frac{\hbox{\rm Vol}_{\tilde{\rho}}(B( c, R))}{e^{h R}} = C.
$$
A closely related result in \cite{margulis} gave an asymptotic formula for the number $\Pi(x,R)$ of geodesic arcs starting and finishing at a given point $x$ of length at most $R$:  
There exists $D > 0$
such that
$$
\lim_{R \to +\infty} \frac{\Pi(x, R)}{e^{h R}} = D.
$$

 A related notion of volume entropy was considered for directed, finite, connected, non-cyclic graphs without terminal vertices by  Lim in \cite{lim}. 
 %Notice that the geodesic flow is not well-defined for graphs and so Lim's definition of volume entropy captures the exponential growth rate of locally minimising geodesics on the graph.  Lim proved several properties of the volume entropy, some of which we generalise to certain graphs with countably many edges.
In this note we  extend Lim's definition of volume entropy to suitable infinite graphs 
and show the analogue of Margulis' result in this context (Theorem \ref{asyminfinite}). 
 As an application we show a version of Margulis' theorem for the   natural analogue of  volume growth for translation surfaces (Theorem \ref{trans}).

  This note originated as a summer MPhil project of the first author.
It may have been possible to apply the transfer operator  methods in \cite{PU}, but instead we employ a more direct and elementary  approach.
%    Eskin and Rafi have announced a related %asymptotic formula for periodic %trajectories using a related approach.
    
    We are grateful to A. Eskin, J. Chaika, R. Sharp, S. Ghazouani and the three anonymous referees for their useful comments.

\section{Infinite Graphs}
In this section we will introduce the types of graphs we shall we working with as well as basic definitions which will be used throughout the paper.

Let $\mathcal G$ be a non-empty connected oriented graph. Let $\mathcal V = \mathcal V(\mathcal G)$ and $\mathcal E = \mathcal E(\mathcal G)$ be the vertex and oriented edge sets respectively. For every edge $e$, let $i(e)$ and $t(e)$ denote the initial and the terminal vertex of $e$, respectively. We 
can define a length distance $d$ on $\mathcal G$ by 
introducing a length function $\ell: \mathcal E \to \mathbb R$ which
assigns a positive real number 
$\ell(e)$ to each edge $e \in \mathcal E$.

\begin{example}[Infinite Graph] \label{example}
Consider a  graph $\mathcal G$ formed from one vertex  and a countably infinite number of edges.
\begin{figure}[ht]
\begin{center}
\begin{tikzpicture}[scale=0.75]
  \draw [fill, black] (-5,0) circle (0.21);
   \draw [->] plot [smooth, tension=2] coordinates { (-5.2,0.25) (-3.5,4) (-1,0) (-3.5,-4) (-5.2,-0.25)};
 \draw [->] plot [smooth, tension=2] coordinates { (-5.1,0.25) (-3.5,3) (-2,0) (-3.5,-3) (-5.1,-0.25)};
  \draw [->] plot [smooth, tension=2] coordinates { (-5,0.25) (-4,2) (-3,0) (-4,-2) (-5,-0.25)};
 \draw [->] plot [smooth, tension=2] coordinates { (-4.9,0.25) (-4.5,1) (-4,0) (-4.5,-1) (-4.9,-0.25)};
 \node at (0.5,0) {$\cdots$};
 \node at (-5.5,0) {$v$};
  \node at (-3.7,0) {$e_1$};
    \node at (-2.7,0) {$e_2$};
       \node at (-1.7,0) {$e_3$};
       \node at (-0.7,0) {$e_4$};
 \end{tikzpicture}
 \end{center}
 \caption{ A single vertex $\mathcal V = \{v\}$ and 
 infinitely many edges $\mathcal E = \{e_n\}_{n=1}^\infty$.}

\end{figure}
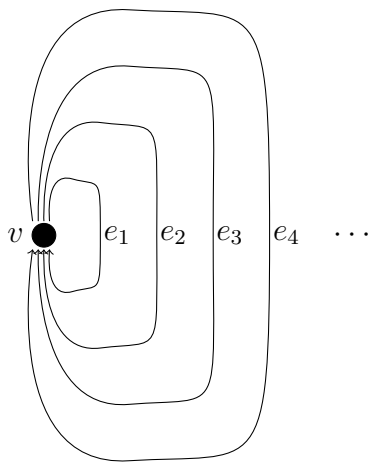
\end{example}

A {\it path}  in $\mathcal G$ corresponds 
to  a sequence of edges  $p=e_1\ldots e_n$  for which  $t(e_j)=i(e_{j+1})$, for $1\leq j <n$ and we denote its length  by $\ell(p)=\sum_{j=1}^n \ell(e_j)$. 

\medskip
\noindent
Let $\mathcal P_{{\mathcal G}}(x,R) = \{p = e_1\ldots e_n \hbox{ : } i(e_1)=x,  \ell(p) \leq R\}$ denote the set of all such paths of length at most  $R$  starting at $x\in \mathcal V({\mathcal G})$.  
We denote its cardinality by 
$N_{{\mathcal G}}(x,R) = \#\mathcal P_{{\mathcal G}}(x,R)$. 
%Similarly, let $\mathcal C_{{G}}(R)$ denote the number of closed paths of %length less than or equal to $R$ in $G$ (i.e. paths $p=e_1\ldots e_n$, such %that $i(e_1)=t(e_n)$).
\medskip

\begin{definition}\label{gentropydef}
We define the \textit{volume entropy} of $(\mathcal G, \ell,x)$ as
\[h(\mathcal G, \ell,x)=\limsup_{R\rightarrow +\infty}\frac{1}{R}\log N_{\mathcal G}(x,R).\]
\end{definition} 

However, we need to make further assumptions on the length function $\ell$ for 
$h(\mathcal G, \ell,x)$ to be finite. 
To see this, consider
the graph  $\mathcal G$ 
in 
Example \ref{example} 
which has a single vertex  and an infinite number of edges, and assume that the  lengths don't tend to infinity.
Then for $R$ sufficiently large, $N_{\mathcal G}(x,R)=\infty$ and thus $h(\mathcal G, \ell,x) = \infty$.

We summarise below the properties of the graph that are needed in the proof.

\bigskip
\noindent
{\bf Graph Hypotheses.}
%\label{hypotheses}
Henceforth, we shall consider graphs with finite vertex set $\mathcal V$ and a countable edge set $\mathcal E$. Furthermore we require that $\mathcal E$ and  the associated
length function satisfy the following properties:
\begin{enumerate}
 %   \item[(1)] $\mathcal V$ is finite and $\mathcal E$ is  countable.
 %       \item There exists some edge $i_{min}$ such that $\ell(i_{min})=\min%\{\ell(i):i\in E({G})\}$.
    \item[(H1)] For all $\sigma>0$ we have 
    $\sum_{e \in \mathcal E} e^{-\sigma \ell(e)}<\infty$;
    \item[(H2)] 
    %$M(G)_0$ is irreducible (i.e.%
     For all  edges $e,e'\in \mathcal E$ there exists a path in $\mathcal G$ which starts with $e$ and ends with $e'$\footnote{A slightly  weaker  assumption would be to  require that for a sufficiently large finite subset
$\mathcal E_0 \subset \mathcal E$, for  every  
     $e, e' \in \mathcal E_0$ there exists a path in $\mathcal G$ which starts with $e$ and ends with $e'$}; 
 %    \footnote{This is a form of irreducibility.  
  %   It would then suffice to assume that $\mathcal V$ is finite.}
and  
    \item[(H3)]  There does not exist a $d > 0$
    such that
%     the lengths of closed paths  satisfy 
     $$\{\ell(c) \hbox{ : } c \hbox{ is a closed path}\}
\subset d \mathbb N.     
     $$
\end{enumerate}
\medskip
\noindent
Under the above hypotheses, the  volume entropy $h=h(\mathcal G, \ell,x)$ does not depend on the choice of  base point $x$.

%In particular,  we can easily deduce the %following.

\begin{lemma}
If 
the  graph $\mathcal G$ satisfies  (H1) and (H2) then $0<h<\infty$. 
\end{lemma}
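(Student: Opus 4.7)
The plan is to prove the two bounds separately; (H1) drives the upper bound and (H2) the lower bound.

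For the upper bound $h < \infty$, I would introduce
$$M(\sigma) := \max_{v \in \mathcal{V}} \sum_{e : i(e) = v} e^{-\sigma \ell(e)},$$
which is finite for every $\sigma > 0$ by (H1) and the finiteness of $\mathcal{V}$. Since for any fixed $\sigma_0 > 0$ one has $e^{-\sigma \ell(e)} \le e^{-\sigma_0 \ell(e)}$ whenever $\sigma \ge \sigma_0$, dominated convergence gives $M(\sigma) \to 0$ as $\sigma \to \infty$; fix $\sigma^*$ with $M(\sigma^*) < 1$. Summing $e^{-\sigma^* \ell(p)}$ over paths $p$ starting at $x$ with exactly $n$ edges factorises edge-by-edge and is bounded by $M(\sigma^*)^n$, so
$$N_{\mathcal{G}}(x,R)\, e^{-\sigma^* R} \;\le\; \sum_{p \in \mathcal{P}_{\mathcal{G}}(x,R)} e^{-\sigma^* \ell(p)} \;\le\; \sum_{n \ge 1} M(\sigma^*)^n \;<\; \infty,$$
which yields $h \le \sigma^* < \infty$.

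For the lower bound $h > 0$, I would manufacture exponentially many paths by iterating two distinct closed loops based at a common vertex. Since the edge set is infinite while $\mathcal{V} \times \mathcal{V}$ is finite, pigeonholing the map $e \mapsto (i(e), t(e))$ produces distinct edges $e_1 \ne e_2$ with $i(e_j) = u$ and $t(e_j) = v$. Applying (H2) to the pair $(e_1, e_2)$ gives a path of the form $e_1\, q\, e_2$, where $q$ is a (possibly empty) path from $v$ to $u$. Then $\gamma_1 := e_1 q$ and $\gamma_2 := e_2 q$ are two distinct closed loops at $u$. Setting $L := \max(\ell(\gamma_1), \ell(\gamma_2))$, each concatenation $\gamma_{\epsilon_1} \cdots \gamma_{\epsilon_n}$ with $\epsilon_i \in \{1,2\}$ is a distinct closed path at $u$ of length at most $nL$, producing $2^n$ such paths (distinctness by reading off the first edge of each block). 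Prepending a fixed path from $x$ to $u$ (obtained from (H2) applied to an outgoing edge at $x$ and an incoming edge at $u$, both of which exist under (H2) combined with $|\mathcal{E}| \ge 2$) of length $D$ gives $2^n$ distinct elements of $\mathcal{P}_{\mathcal{G}}(x, D+nL)$, so
$$h \;\ge\; \limsup_{n \to \infty} \frac{\log 2^n}{D + nL} \;=\; \frac{\log 2}{L} \;>\; 0.$$

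The main obstacle I anticipate is the lower bound, specifically the production of two \emph{distinct} closed loops at the \emph{same} vertex: this is where the infinitude of $\mathcal{E}$ (via pigeonhole) and (H2) (to close the loop) must both be invoked. The upper bound is essentially a one-line Markov/partition-function estimate once $M(\sigma^*) < 1$ is in hand, with (H1) entering only through the convergence $M(\sigma) \to 0$.
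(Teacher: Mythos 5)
Your proof is correct and takes essentially the same route as the paper's: the upper bound comes from making the path partition function $\sum_p e^{-\sigma\ell(p)}$ a convergent geometric series for some finite $\sigma$ (the paper uses the cruder bound $\sum_{e\in\mathcal E}e^{-\sigma\ell(e)}$ instead of your per-vertex maximum $M(\sigma)$, then invokes the abscissa of convergence rather than the pointwise bound $N_{\mathcal G}(x,R)e^{-\sigma^* R}\le \eta_{\mathcal G}(\sigma^*)$, but these are cosmetic differences), and the lower bound comes from pigeonholing to produce two distinct closed loops through a common vertex and counting their $2^n$ concatenations. Your version spells out the pigeonhole step more explicitly (pairs $(i(e),t(e))$ rather than just $\mathcal V$) and is careful about the prepended $x$-to-$u$ path; the paper states the same conclusion more tersely.
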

\begin{proof}
By assumption (H2), and the pigeonhole principle applied to $\mathcal V$, there exist a path connecting the base point $x$ to some vertex $v$  and two closed paths, $c_1$ and $c_2$, which pass through $v$. By considering all possible concatenations of these closed paths it is clear that 
there exists $b>0$ such that 
 $N_{\mathcal{G}}(x,R)\geq 2^{\lfloor R/b\rfloor}$ for all $R>0$ and hence $h\geq \frac{\log 2}{b} >0$.

To see that $h$ is finite %consider the graph $\mathcal G'$ obtained %from $\mathcal G$ by identifying all %vertices in $\mathcal V$. Clearly 
%$N_{\mathcal{G}}(x,R)\leq N_{\mathcal G'}
%(x,R)$,   for all $R>0$, which implies that %$h(\mathcal G',\ell')>h(\mathcal G,\ell)$. %By writing 
we can formally write
$$\sum_{p\in \mathcal{P}_{\mathcal{G}}(x,R)}e^{-\sigma \ell(p)}\leq \sum_{n=1}^\infty \bigg(\sum_{e\in \mathcal{E}}e^{-\sigma \ell(e)}\bigg)^n,\eqno(2.1)$$
for $\sigma > 0$,
where the Right Hand Side involves all possible sums of edge lengths.
%One can see  that
%this series converges for 
% $h(\mathcal G',\ell')>h(\mathcal G,\ell)$ %is finite as by the assumption (H1), the %geometric sum on the right side converges %for 
%$\sigma = \sigma_0$
%sufficiently large  that $\sum_{e\in %\mathcal{E}}e^{-\sigma \ell(e)} < 1$, using %(H1). for 
Using (H1) one can see that 
for $\sigma = \sigma_0$
sufficiently large  $\sum_{e\in 
\mathcal{E}}e^{-\sigma \ell(e)} < 1$
and thus the geometric series 
on the Right Hand Side of (2.1) converges.
 In particular, since $h$ is 
easily seen to be  
 the absicssa of convergence of the series on the Left Hand Side of (2.1) we see  that
$h \leq \sigma_0 < +\infty$, as required.
\end{proof}
%The proof of the above lemma involves finding two graphs, which have a %smaller/larger entropy than $G$ and then calculating the volume entropy of %these graphs explicitly. In particular, one can show that there exists a %closed path $p$ in $G$ and a constant $U$ satisfying $\sum_{i=1}^\infty e^{-%U\ell(i)}=1$ such that $\frac{\log(2)}{\ell(p)}\leq h \leq U$.\\

Our main result for $\mathcal G$ is  the following asymptotic for the growth of paths.
 \begin{thm}\label{asyminfinite}
If 
the  graph $\mathcal G$ satisfies  (H1),(H2) and (H3) then
there exists a constant $C>0$ such that $N_{{\mathcal G}}(x,R) \sim C e^{hR}$, i.e.,
 $$
 \lim_{R \to +\infty}
 \frac{N_{{\mathcal G}}(x,R)}{e^{hR}} = C.
 $$ 
 \end{thm}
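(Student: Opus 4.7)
The plan is to recast the path-counting as a matrix-analytic problem via a vertex-indexed transfer operator, and then extract the asymptotic from a Tauberian theorem. Because $|\mathcal V|$ is finite, hypothesis (H1) lets us collapse the countable sum over edges into a finite matrix. Introduce the Poincar\'e-type series
\[
\eta_x(s) \,=\, \sum_{p} e^{-s\ell(p)} \,=\, \int_0^\infty e^{-sR}\,dN_{\mathcal G}(x,R),
\]
the sum being over all paths starting at $x$, whose abscissa of convergence equals $h$. Define the analytic matrix-valued function
\[
A(s)_{uv} \,=\, \sum_{e\in\mathcal E:\, i(e)=u,\,t(e)=v} e^{-s\ell(e)},
\]
which by (H1) is holomorphic on $\{\operatorname{Re}(s)>0\}$, and decompose paths by their edge sequence to obtain
\[
\eta_x(s) \,=\, e_x^{\top}\bigl(I-A(s)\bigr)^{-1}A(s)\mathbf 1
\]
wherever the spectral radius of $A(s)$ is less than one.

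Next I would locate the poles of $\eta_x$. For real $s$, hypothesis (H2) makes $A(s)$ an irreducible non-negative matrix, so by Perron--Frobenius it carries a simple dominant real eigenvalue $\lambda(s)$, strictly decreasing in $s$ with $\lambda(s)\to 0$ as $s\to\infty$. Since $h$ is the abscissa of convergence of $\eta_x$, the equation $\lambda(s)=1$ is solved uniquely by $s=h$, and simplicity of this eigenvalue together with $\lambda'(h)\neq 0$ ensures that $\eta_x$ extends meromorphically to $\{\operatorname{Re}(s)>0\}$ with a simple pole at $s=h$.

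The key remaining step is to rule out further poles on the line $\{\operatorname{Re}(s)=h\}$, and this is where (H3) enters. Suppose $A(h+it)$ has $1$ as an eigenvalue for some $t\neq 0$, with eigenvector $w$. Taking componentwise absolute values and comparing $|w|$ with the positive Perron eigenvector of $A(h)$, the triangle inequality must be an equality throughout; by (H2), iterating this around closed paths forces $e^{-it\ell(c)}=1$ for every closed path $c$, so all closed-path lengths lie in $(2\pi/|t|)\mathbb Z$, contradicting (H3). Given this, the Wiener--Ikehara Tauberian theorem applied to the Laplace--Stieltjes transform $\eta_x$ of the nondecreasing function $R\mapsto N_{\mathcal G}(x,R)$ delivers $N_{\mathcal G}(x,R)\sim C\, e^{hR}$, with $C$ expressible in terms of the left and right Perron eigenvectors of $A(h)$ evaluated against $e_x$ and $\mathbf 1$, divided by $\lambda'(h)$.

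The main obstacle is the non-lattice step: converting an eigenvalue of modulus one of the complex matrix $A(h+it)$ into a lattice condition on closed-path lengths requires a careful complex Perron--Frobenius argument propagating equality in the triangle inequality around every cycle using (H2). A secondary technical point is verifying that the extension of $\eta_x(s)-C/(s-h)$ to $\{\operatorname{Re}(s)\geq h\}$ is continuous enough for the precise Tauberian statement being invoked, but this follows from the holomorphicity of $A(s)$ on $\{\operatorname{Re}(s)>0\}$ and the simplicity of the Perron eigenvalue at $s=h$.
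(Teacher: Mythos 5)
Your proof is correct, but it takes a genuinely different route from the paper's. You collapse the transfer operator to the \emph{finite} $|\mathcal V|\times|\mathcal V|$ matrix $A(s)_{uv}=\sum_{e:i(e)=u,\,t(e)=v}e^{-s\ell(e)}$, whose entries are holomorphic on $\{\mathrm{Re}(s)>0\}$ by (H1), and then read off the meromorphic continuation of $\eta_x(s)=e_x^{\top}(I-A(s))^{-1}A(s)\mathbf 1$ directly from $\det(I-A(s))$, the simple pole at $s=h$ from Perron--Frobenius, and the absence of other poles on $\mathrm{Re}(s)=h$ from an equality-case analysis in the triangle inequality (essentially Wielandt's theorem unwound). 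The paper instead works with an \emph{infinite} edge-indexed matrix $M_z$ (with $M_z(a,b)=M_0(a,b)e^{-z\ell(b)}$, $M_0(a,b)=\mathbf 1[t(a)=i(b)]$), viewed as a bounded operator on $\ell^\infty$, and uses a Hofbauer--Keller block decomposition to reduce invertibility of $I-\widehat{M_z}$ to that of a finite matrix $W_z=A_z+B_z(I-D_z)^{-1}C_z$, after which the Perron--Frobenius and Wielandt arguments and the Ikehara--Wiener theorem proceed as in your sketch. For the purely graph-theoretic Theorem~\ref{asyminfinite}, your vertex-indexed reduction is cleaner and shorter, since it exploits the finiteness of $\mathcal V$ at the outset and avoids the operator-theoretic machinery entirely. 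What the paper's edge-indexed formulation buys is uniformity with Section~6: for translation surfaces the ``edges'' are saddle connections and the ``vertices'' are singularities, but whether $ss'$ may be concatenated into a geodesic depends on an \emph{angle} condition at the shared singularity, not merely on $t(s)=i(s')$, so there is no finite vertex-indexed matrix capturing the allowed transitions, and the infinite edge-indexed matrix with the Hofbauer--Keller truncation is what makes that application go through. Within the scope of Theorem~\ref{asyminfinite} as stated, your argument is complete modulo two small points you should make explicit: that $0<h<\infty$ (already supplied by the preceding lemma) is what guarantees a real solution to $\lambda(s)=1$, and that (H2) forces every vertex to have at least one outgoing and one incoming edge so that $A(\sigma)$ is irreducible and $A(h)\mathbf 1$, $e_x^\top$ pair nontrivially with the Perron eigenvectors, making the residue at $s=h$ strictly positive.
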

 
 The  proof follows the lines of the classical proof of the prime number theorem.
 In particular, it is  based on the use of a Tauberian theorem (in Section 5).
 This, in turn,  depends on the properties of the complex function $\eta_{\mathcal{G}}(z)$, the Laplace transform of $N_{{\mathcal G}}(x,R)$ (defined in Section 4).  The function $\eta_{\mathcal{G}}(z)$   is   analysed using matrices introduced in the next section.  In the special case of finite graphs, the asymptotic in Theorem \ref{asyminfinite} could be easily deduced using ideas in \cite{parry} for finite matrices.  
 
 \begin{rem}
 Without hypothesis  (H3) this theorem may not hold.   For example, even in the case of finite graphs, if we consider the graph $\mathcal G$ with a single vertex and two edges of length 1, then $N_{\mathcal G}(x,R)=2^{\lfloor R \rfloor}$ for all $R>0$. In this case, the  limit in Theorem 2.4 does not converge.

 %let $l(i) = i$, say, in Example \ref{example} then we see that 
 %$N_{{\mathcal G}}(x,R) = N_{{\mathcal G}}(x,[R])$ for any $R$, which is incompatible with the exponential asymptotic.  
 \end{rem}
 \section{Countable Matrices}

In this section consider a 
graph  
$\mathcal G$ 
 and length function $\ell$
which satisfy hypotheses (H1)-(H3). 
Let us order the edge set $\mathcal E = (e_a)_{a \in \mathbb N}$ by non-decreasing length and write $\ell (a) := \ell(e_a)$, 
$a \in \mathbb N$.

\begin{definition}
 We can associate to $\mathcal G$ the infinite matrix $M_0$ 
 defined by 
$$M_0(a,b)=\begin{cases} 1 &\mbox{if }  t(a)=i(b), \\
0 & \mbox{otherwise. }  \end{cases} $$
For each  $z\in \mathbb C$ 
%such that $Re(s)>0$, 
we define the matrix $M_z$ by $M_z(a,b)=M_0(a,b)e^{-z\ell(b)}$ for $a,b\in \mathcal E$.\\
\end{definition}

%Could turn the infinite rose into an example:
%Choose $\sigma$ large enough such that $\sum_{i=1}^\infty e^{-\sigma\ell(i)}<1$. Then the right hand side of (3.1) is a geometric series which converges and hence $$\eta(\sigma)=\frac{\sum_{i=1}^\infty e^{-\sigma\ell(i)}}{1-\sum_{i=1}^\infty e^{-\sigma\ell(i)}}<\infty.\eqno(3.2)$$

%In particular, the abscissa of convergence of $\eta(s)$, $U=h(R_G,d')$, is the unique real number satisfying 
%$\sum_{i=1}^\infty e^{U\ell(i)}=1.$

Let $P(n,a,b)$ denote the set of paths in $\mathcal {G}$ consisting of $n$ edges, 
starting with edge $e_a$ and ending with edge $e_b$. It then follows from 
formal matrix multiplication that for any $n\geq 1$, we can write the $(a,b)^{th}$ entry of the $n^{th}$ power of the matrix as:
$$
M_z^n(a,b) = e^{z\ell(a)}\sum_{p\in P(n+1,a,b)} e^{-z\ell(p)}, \eqno(3.1).
$$
which will be finite by hypothesis  (H1).

Given a matrix $L = \left(L(a,b)\right)_{a,b =1}^\infty$ with $\sup_a \sum_b |L(a,b)| < +\infty$ we can associate to $L$ 
a bounded linear operator 
$\widehat L: \ell^\infty(\mathbb C) \to \ell^\infty(\mathbb C)$
by
$$\widehat L(\underline u)=\Big(\sum_{b=1}^\infty L(a,b)u_b\Big)_{a=1}^\infty \hbox{ where }
\underline u = (u_b)_{b=1}^\infty \in \ell^\infty(\mathbb C).$$
In particular, by hypothesis (H1), when $Re(z)>0$ we can associate to  $M_z$  a bounded operator  $\widehat {M_z}: \ell^\infty(\mathbb{C}) \to \ell^\infty(\mathbb{C})$ by
$$\widehat {M_z}(\underline u)=\Big(\sum_{b=1}^\infty M_z(a,b)u_b\Big)_{a=1}^\infty.$$

To proceed, we would like to understand the domain of meromorphicity  of the linear operator $(I- \widehat {M_z})^{-1}: \ell^\infty(\mathbb C) \to \ell^\infty(\mathbb C)$, where $I$ denotes the identity operator.
To this end, we shall make use of an idea by Hofbauer and Keller in \cite{hofbauerkeller}, where they observe that the invertibility of certain operators
of the above form depends only on the determinant of an associated finite matrix.\\

Fix  $\epsilon > 0$ and, for convenience,  assume also $h > \epsilon$. 
Given $k \geq 1$, 
we can truncate the matrix $M_z$ to the $k \times k$ matrix  $A_z = (M_z(i,j))_{i,j=1}^k$.
Then  we can then write
$$
M_z = 
\left(
\begin{matrix}
A_z&B_z  \\ C_z &D_z
\end{matrix}
\right)
$$
where, in particular, 
$D_z = \left(M_z(i+k,j+k) \right)_{i,j=1}^\infty$. Again, we can interpret $I-\widehat{D_z}$ as a bounded linear operator  on $\ell^\infty(\mathbb C)$ 
and write   $(I-\widehat{D_z})^{-1}=\sum_{m=0}^\infty \widehat{D_z}^m$ if the operator $\widehat{D_z}$ has norm $\|\widehat{D_z}\|<1$.
In particular, this is true when  $Re(z) \geq \epsilon$ for $k$ sufficiently large, since by (H1) we have 
$$\|\widehat{D_z}\|
\leq   \sup_{n\in\mathbb{N}}\sum_{m=1}^\infty |D_z(n,m)| 
\leq   \sum_{m=1}^\infty  e^{-Re(z) \ell(m+k)} \leq   \sum_{m=1}^\infty  e^{-\epsilon \ell(m+k)} <1. \eqno(3.2)
$$
Writing $\ell^\infty(\mathbb C)$ as 
the corresponding direct sum of two subspaces,
we can then easily verify that  
$$
I -\widehat {M_z} = 
\left(
\begin{matrix}
I - \widehat {A_z} - \widehat {B_z} (I-\widehat {D_z})^{-1}\widehat {C_z}&-\widehat {B_z} (I-\widehat {D_z})^{-1}  \\ 0 &I 
\end{matrix}
\right)
\left(
\begin{matrix}
I &0  \\ -\widehat {C_z} & I - \widehat {D_z}
\end{matrix}
\right).\eqno(3.3)
$$

\noindent
Let us denote the $k \times k$ matrix   $W_z:= A_z +B_z (I-D_z)^{-1}C_z$, where each entry is given by a convergent series.
By (3.3), whenever $\det(I-W_z)\neq 0$ then  we see that $I-\widehat {M_z}$ is invertible,  with inverse
$$(I-\widehat {M_z})^{-1}=\left(
\begin{matrix}
I &0  \\ (I - \widehat {D_z})^{-1}\widehat {C_z} & (I - \widehat {D_z})^{-1}
\end{matrix}
\right)
\left(
\begin{matrix}
(I-\widehat {W_z})^{-1}&(I-\widehat {W_z})^{-1}\widehat {B_z} (I-\widehat {D_z})^{-1}  \\ 0 &I 
\end{matrix}
\right).\eqno(3.4)
$$
This leads to the following result.

\begin{lemma}\label{operator}
The operator $(I-\widehat {M_z})^{-1}$ has an analytic extension to $Re(z) > 0$
except when $\det(I-W_z) = 0$.
\end{lemma}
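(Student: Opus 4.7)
The plan is to read Lemma~\ref{operator} off from the block identity (3.3) and the inverse formula (3.4): once each operator-valued factor on the right of (3.4) is shown to be analytic in $z$ on $\{Re(z) > 0\}$, the only potential singularities of $(I - \widehat{M_z})^{-1}$ must come from the invertibility of $I - \widehat{W_z}$, which is in turn governed by the finite-dimensional determinant $\det(I - W_z)$.

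To carry this out, I would fix an arbitrary $z_0$ with $Re(z_0) > 0$, pick $0 < \epsilon < Re(z_0)$, and enlarge the truncation level $k = k(\epsilon)$ as in (3.2) so that $\|\widehat{D_z}\| < 1$ uniformly for $Re(z) \geq \epsilon$. Each entry $D_z(i,j) = M_0(i+k, j+k) e^{-z\ell(j+k)}$ is entire in $z$, and by (H1) the row sums $\sum_j |D_z(i,j)|$ are dominated by a convergent series locally uniformly in $z$; this promotes entrywise analyticity to operator-norm analyticity of $z \mapsto \widehat{D_z}$. The Neumann series $(I - \widehat{D_z})^{-1} = \sum_{m \geq 0} \widehat{D_z}^m$ then converges locally uniformly in operator norm and inherits analyticity on $\{Re(z) > \epsilon\}$. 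The entries of the $k \times k$ matrix $W_z = A_z + B_z(I - D_z)^{-1} C_z$ are convergent series of analytic functions, so $W_z$ is an analytic matrix-valued function of $z$, and hence $\det(I - W_z)$ is analytic. Since $\widehat{W_z}$ acts as multiplication by the finite matrix $W_z$ on the first $k$ coordinates of $\ell^\infty(\mathbb{C})$, the operator $I - \widehat{W_z}$ is invertible precisely when $\det(I - W_z) \neq 0$, with inverse given by the adjugate formula, which is analytic off the zero set.

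Substituting these ingredients into (3.4) realises $(I - \widehat{M_z})^{-1}$ as a product of analytic operator-valued functions on $\{Re(z) > \epsilon\} \setminus \{z : \det(I - W_z) = 0\}$; since $\epsilon > 0$ was arbitrary, the analytic extension covers all of $\{Re(z) > 0\}$ minus that zero set. The main technical point I would be careful about is not the algebra of (3.3)--(3.4), which is purely formal, but the upgrade from entrywise to operator-norm analyticity: one must verify that the series $\sum_b M_z(a,b) u_b$ and the subsequent Neumann expansion converge uniformly in $a$ and locally uniformly in $z$, so that the operator-valued product obtained from (3.4) is genuinely an analytic extension rather than merely a pointwise inverse.
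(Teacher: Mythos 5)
Your argument is correct and follows the same route as the paper: the paper's own proof is essentially a one-liner citing the factorization (3.4) together with the observation that $\epsilon>0$ can be taken arbitrarily small. You have simply filled in the operator-norm analyticity details (local uniform convergence of the Neumann series, analyticity of the entries of $W_z$, the adjugate formula) that the paper leaves implicit.
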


\begin{proof}
This follows from the identity (3.4) and since the $\epsilon > 0$ chosen in the above construction can be chosen arbitrarily small.
\end{proof}

\section{Complex functions}
We can now introduce a  complex function whose analytic properties will be useful in deriving our asymptotic estimates for $N_{\mathcal G}(x,R)$.   Fix $x \in \mathcal V$.
\begin{definition}
We can formally define the complex function $$\eta_{\mathcal G}(z)=\int_0^\infty e^{-zR}dN_{\mathcal G}(x,R)=\sum_{p\in P(x)} e^{-z\ell(p)}, \quad z \in \mathbb C,$$ where 
$P(x) = \{ p = e_1 \cdots e_n \hbox{ : } n \geq 0, i(e_1)=x \}$ is the set of paths in $\mathcal G$ starting at $x$. 
\end{definition}

We first observe that $\eta_{\mathcal G}(z)$ converges to an analytic function for $Re(z) > h$, by virtue of Definition \ref{gentropydef}.
In order to construct a meromorphic extension of $\eta_{\mathcal G}(z)$ we shall  relate $\eta_{\mathcal G}(z)$ to the matrix $M_z$.
For  $Re(z)>0$, we define:
\begin{enumerate}
\item[(a)]
$\underline{w}(z)=
 (\chi_{\mathcal E_x}(e_j)e^{-z\ell(j)})_{j=1}^\infty \in \ell^1(\mathbb C)$ where $\chi_{\mathcal E_x}$ denotes the characteristic function of  the set $\mathcal E_x = \{e \in \mathcal E \hbox{ : } i(e)=x\}$ of edges   whose  initial vertex  is $x$; and
\item[(b)] 
 $\underline {1} = (1)_{j=1}^\infty \in \ell^\infty(\mathbb C)$ is the vector all of whose entries are equal to $1$,  
\end{enumerate} 
  then we can formally   rewrite $\eta_{\mathcal G}(z)$ as
$$
\begin{aligned}
\eta_{\mathcal G}(z) 
=\sum_{p\in P(x)} e^{-z\ell(p)} &=
\underline {w}(z) \cdot \Big(\sum_{n=0}^\infty  \widehat{M_z}^n\Big) \underline 1\cr
&=
\underline {w}(z) \cdot \Big(I - \widehat{M_z}\Big)^{-1} \underline 1,
\end{aligned}
\eqno(4.1)
$$
where $w \cdot v = 
\sum_{j=1}^\infty w_j v_j$ for 
$w \in \ell^1(\mathbb C)$ and 
$v \in \ell^\infty(\mathbb C)$.  
Observe that for $Re(z) > 0$ we have 
$\underline {w}(z) \in \ell^1(\mathbb C)$ by (H1).
In particular, by Lemma \ref{operator} the expression in (4.1) extends  to $Re(z) > 0$,  and the locations of the poles are given by those $z$ such that the finite rank operator $(I-W_z)$ is not invertible. 
Moreover, we can easily write 
$$\eta_{\mathcal G}(z)=\frac{\phi(z)}{\det(I-W_z)}\eqno(4.2)$$
where $\phi(z)$ is holomorphic on $Re(z)> 0$.

\begin{prop}
$\eta_{\mathcal G}(z)$ has a meromorphic extension to $Re(z)>0$.
\end{prop}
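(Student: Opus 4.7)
The plan is to verify the factorisation (4.2) and deduce meromorphicity from it. Fix an arbitrary $\epsilon > 0$ and work in the half-plane $\{Re(z) \geq \epsilon\}$; since $\epsilon$ is arbitrary, this will cover all of $Re(z) > 0$.

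First, I would check analytic dependence of every ingredient on $z$ in that half-plane. By (H1), $\underline{w}(z) = (\chi_{\mathcal E_x}(e_j) e^{-z\ell(j)})_j$ is $\ell^1(\mathbb{C})$-valued and depends holomorphically on $z$ (uniform convergence of $\sum_j e^{-Re(z) \ell(j)}$ on $Re(z) \geq \epsilon$). The same uniform bound shows that the bounded operators $\widehat{B_z}$, $\widehat{C_z}$, and $\widehat{D_z}$ depend holomorphically on $z$ as operators on $\ell^\infty(\mathbb C)$ (and the finite blocks $A_z$, $B_z$, $C_z$ have entries that are finite sums of exponentials, hence entire). By (3.2), $\|\widehat{D_z}\| < 1$ on $Re(z) \geq \epsilon$ once $k$ is chosen large enough, so $(I - \widehat{D_z})^{-1} = \sum_{m=0}^\infty \widehat{D_z}^m$ converges normally and is holomorphic there. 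Consequently each entry of the $k \times k$ matrix $W_z = A_z + B_z (I - \widehat{D_z})^{-1} C_z$ is a holomorphic function of $z$, and so is the finite determinant $\det(I - W_z)$.

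Next, I would invoke Lemma \ref{operator} and the identity (3.4) to write
$$
\eta_{\mathcal G}(z) \;=\; \underline w(z)\cdot (I-\widehat{M_z})^{-1}\underline 1
$$
as a sum of finitely many terms, each of which is a contraction of $\underline w(z)$ and $\underline 1$ with a composition of operators taken from the collection $\{\widehat{B_z},\widehat{C_z},(I-\widehat{D_z})^{-1}, (I-W_z)^{-1}\}$. Every such operator is holomorphic on $Re(z) \geq \epsilon$ except for the finite-rank factor $(I-W_z)^{-1}$. Applying Cramer's rule to the $k \times k$ matrix $(I-W_z)$,
$$
(I-W_z)^{-1} \;=\; \frac{\mathrm{adj}(I-W_z)}{\det(I-W_z)},
$$
and noting that the entries of $\mathrm{adj}(I-W_z)$ are polynomials in the holomorphic entries of $I-W_z$, I can pull $\det(I-W_z)$ out as a common denominator to obtain
$$
\eta_{\mathcal G}(z) \;=\; \frac{\phi(z)}{\det(I-W_z)},
$$
with $\phi(z)$ holomorphic on $Re(z) \geq \epsilon$, which is (4.2).

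Since $\det(I-W_z)$ is a scalar holomorphic function on $Re(z) \geq \epsilon$, the quotient is meromorphic there, with poles contained in the zero set of $\det(I-W_z)$. Letting $\epsilon \downarrow 0$ yields a meromorphic extension of $\eta_{\mathcal G}(z)$ to all of $Re(z) > 0$. The main technical point to be careful about is verifying that the $k\times k$ matrix $W_z$ (whose entries are infinite series coming from $(I-\widehat{D_z})^{-1}$) really is holomorphic entrywise; this is where the uniform bound (3.2) and the freedom to choose $k$ large depending on $\epsilon$ are essential.
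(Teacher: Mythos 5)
Your argument is correct and follows the same route as the paper: establish holomorphicity of $\det(I-W_z)$ (and of the numerator $\phi(z)$) on $Re(z) \geq \epsilon$, invoke the factorisation $\eta_{\mathcal G}(z) = \phi(z)/\det(I-W_z)$ from (4.2), and let $\epsilon \downarrow 0$. You are more explicit than the paper in tracking the dependence of $k$ on $\epsilon$ and in deriving $\phi(z)$ from (3.4) via the adjugate, but these are exactly the details the paper's brief proof takes for granted.
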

\begin{proof}
%Consider the extension of $\eta(s)$ to $Re(s)>0$ (equation (3.5)). Fix $%
%\epsilon$ such that $0<\epsilon<h$. By our previous work, we have shown that %our extension agrees with $\eta[k_\epsilon](s)$ on $Re(s)\geq \epsilon$. %Once we show that $\eta[k_\epsilon](s)$ is meromorphic on this region, it %will follow that the extension in (3.5) is meromorphic on $Re(s)\geq 
%\epsilon$ and hence on $Re(s)>0$ as $\epsilon$ was arbitrary. 
Observe  that $\det(I-W_z)$ is the sum of a countable number of holomorphic functions which uniformly converge  on any compact domain 
in $Re(z)>0$ and hence 
$\det(I-W_z)$ is holomorphic.
The result follows from the identity (4.2).
% and because $\epsilon>0$ can be chosen arbitrarily small. 
%By (3.8), we see that $\eta[k_\epsilon](s)$ is a quotient of two holomorphic %functions and so its poles arise as zeros of $\det(I-W_z)$ which are %isolated. 
\end{proof}
%From now on fix an arbitrary $\epsilon$ such that $0<\epsilon<h$. Then 
Let  $\epsilon<h$.
By (3.2) we can choose $k$ large enough such that  $(I-\widehat{D_z})$ is invertible,
 on the half plane $Re(z)\geq \epsilon$.
 %and
%the 
%%$k\times k$ truncated matrix $A_s$ 
%the first $k$ edges include those in the %finite set in the weaker version of  %(H2); 
% and,  finally, 
%that both of  the edges of $c_1$ and $c_2$ have index lower than $k$ (in hypothesis  (H3)).
% By the above work, when $Re(s)\geq \epsilon$, we can write $\eta_{\mathcal G}(s)$ in the %form (3.8).
Recall that
a non-negative $n\times n$ matrix $M$ is {\it irreducible} if for all $i,j$ satisfying $1\leq i,j\leq n$ there exists a natural number $m$ such that $(M^m)_{i,j}>0$.

\begin{lemma}\label{lemnonneg}
Let $\sigma>0$. Then $W_\sigma$ is a non-negative irreducible  matrix. Furthermore,  $W_\sigma$ has a simple maximal positive eigenvalue
$\rho(\sigma)=\rho(W_\sigma)$, which depends  analytically on $\sigma$ and satisfies $\rho'(\sigma)<0.$

\end{lemma}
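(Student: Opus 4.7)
My plan is to establish each of the assertions (non-negativity, irreducibility, existence of a simple dominant eigenvalue, analytic dependence, strictly negative derivative) in order. For real $\sigma>0$, every entry of $M_\sigma(a,b)=M_0(a,b)e^{-\sigma\ell(b)}$ is non-negative, so the four blocks $A_\sigma,B_\sigma,C_\sigma,D_\sigma$ are non-negative, and by (3.2) the Neumann expansion $(I-D_\sigma)^{-1}=\sum_{m\ge 0}D_\sigma^{m}$ converges entrywise to a non-negative matrix. Hence $W_\sigma=A_\sigma+B_\sigma(I-D_\sigma)^{-1}C_\sigma$ is non-negative. To obtain irreducibility, I will expand this Neumann series combinatorially: $W_\sigma(i,j)$ equals the sum, over all paths $p$ in $\mathcal{G}$ starting with the (inner) edge $e_i$, ending with the (inner) edge $e_j$ and whose intermediate edges all lie in $\{e_{k+1},e_{k+2},\dots\}$, of $e^{-\sigma(\ell(p)-\ell(e_i))}$; similarly $W_\sigma^{m}(i,j)$ sums over paths from $e_i$ to $e_j$ visiting exactly $m+1$ inner edges in total. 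Hypothesis (H2) supplies a path from $e_i$ to $e_j$ for every $i,j\in\{1,\dots,k\}$, and that path visits a definite number $m+1\ge 1$ of inner edges, forcing $W_\sigma^{m}(i,j)>0$.

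With non-negativity and irreducibility in hand, the finite-dimensional Perron--Frobenius theorem furnishes a simple dominant eigenvalue $\rho(\sigma)>0$ together with strictly positive right and left eigenvectors $v(\sigma)$ and $u(\sigma)$, which I normalise by $u(\sigma)^{T}v(\sigma)=1$. For analyticity, by (3.2) and the Neumann expansion each entry of $W_\sigma$ is an absolutely (and uniformly on compacta) convergent sum of functions analytic in $\sigma$ on $\{\mathrm{Re}(\sigma)>\epsilon\}$, so $W_\sigma$ is analytic there. Then $\det(\lambda I-W_\sigma)$ is jointly analytic in $(\lambda,\sigma)$ and $\rho(\sigma)$ is a simple root of the $\lambda$-polynomial, so the implicit function theorem produces an analytic branch $\sigma\mapsto\rho(\sigma)$.

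Finally, differentiating the eigen-equation $W_\sigma v(\sigma)=\rho(\sigma)v(\sigma)$ and pairing with $u(\sigma)$ yields, after using the left eigen-equation and the normalisation, the familiar first-order formula $\rho'(\sigma)=u(\sigma)^{T}W_\sigma'\,v(\sigma)$. From the path-sum representation above, every entry of $W_\sigma$ has the form $\sum_{p}c_{p}e^{-\sigma\tau_{p}}$ with $c_{p}\ge 0$ and $\tau_{p}=\ell(p)-\ell(e_i)>0$, since each contributing path contains at least its terminal edge $e_j$, of positive length. Hence every entry of $W_\sigma'$ is non-positive, and strictly negative wherever $W_\sigma(i,j)>0$. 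Because $u,v$ are strictly positive and $W_\sigma$ has at least one positive entry (being irreducible), $u^{T}W_\sigma'\,v<0$, giving $\rho'(\sigma)<0$. The main obstacle I anticipate is this combinatorial reading of $W_\sigma$ and its powers via Schur-complement-as-path-counting, because (H2) can only be applied once positive entries of $W_\sigma^{m}$ are identified with genuine paths in $\mathcal{G}$ visiting the inner block at prescribed positions; once that identification is secured, the remainder is standard finite-dimensional Perron--Frobenius combined with elementary perturbation theory.
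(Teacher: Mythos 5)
Your proof is correct and follows essentially the same route as the paper's: non-negativity from the block structure and Neumann series, irreducibility from (H2) via the combinatorial interpretation of the Schur complement $W_\sigma$ and its powers as sums over paths through the inner/outer blocks, the Perron--Frobenius theorem for the dominant eigenvalue and positive eigenvectors, and the formula $\rho'(\sigma)=u^{T}W'_\sigma v$ obtained by differentiating the eigen-equation. You are in fact slightly more careful than the paper in two spots: you identify the correct exponent $m$ (the number of inner-edge visits minus one) that (H2) forces to be positive, rather than conflating it with the total path length, and you correctly note that $W'_\sigma(i,j)$ is merely non-positive and strictly negative only where $W_\sigma(i,j)>0$ — which, together with irreducibility and the strict positivity of $u,v$, still gives $\rho'(\sigma)<0$ — whereas the paper asserts strict negativity of every entry of $W'_\sigma$.
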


\begin{proof}
Recall that $W_\sigma=A_\sigma+B_\sigma(I-D_\sigma)^{-1}C_\sigma$, which by construction is a non-negative matrix. 
We can also deduce that  
 the matrix  $W_\sigma$ is irreducible. To see this, note that by assumption $\text{(H2)}$, for all $1 \leq i, j \leq k$, there exists some path of length $n$ starting with edge $e_i$ and ending with edge $e_j$. Such a path can be broken up into sub-paths of two types.
 The first type consists 
 of those paths that   stay completely within $\{e_1, \cdots, e_k\}$, and
the second type which consists of those paths that   initially enter the complement $\mathcal E - \{e_1, \cdots, e_k\}$ and finally leave at their end.  Note that $W_\sigma^n(i,j)$ is a sum including powers of $A_\sigma(i,j)$ (corresponding to sub-paths of the first type) and $B_\sigma (I-D_\sigma)^{-1}C_\sigma$ (corresponding to sub-paths of the second type), where the powers are less than or equal to $n$. Hence $W^n_\sigma (i,j)>0$.

 %(for sub-paths of the first type) and powers  of $B_\sigma (I-D_\sigma)^{-1}C_\sigma$
% (for sub-paths of the first type).
 
 %To see this, note that any  path of length $n$ in $\mathcal G$ starting and ending with edges in $1 \leq i, j \leq k$, is guaranteed by (H2) can be broken up into sub-paths of two types:  Either those which  stay completely within $\{1, \cdots, k\}$, and those which initially enter the complement $\mathcal V - \{1, \cdots, k\}$ and finally leave at their end.  Thus we see that $W_\sigma^n(i,j) > 0$,  is the sum of powers of $A_\sigma$, corresponding to the contributions of powers of $W_\sigma$
 %(for sub-paths of the first type) and powers  of $B_\sigma (I-D_\sigma)^{-1}C_\sigma$
% (for sub-paths of the first type).
%As $W_\sigma$ is the sum of the non-negative irreducible matrix $A_z$ and %the non-negative matrix $B_\sigma(I-D_\sigma)^{-1}C_\sigma$, $W_\sigma$ %itself is irreducible.
We can now apply the  Perron-Frobenius theorem (see \cite{gant})
to deduce that the maximal positive eigenvalue   $\rho(\sigma)>0$
for  $W_\sigma$ exists and
that  $W_\sigma$ has associated  positive left and right eigenvectors $u(\sigma)$ and $v(\sigma)$ (which we normalise so that $u(\sigma)v(\sigma)=1$). By differentiating  the eigenvalue equations for $u(\sigma)$ and $v(\sigma)$, one can show that $$\rho'(\sigma)=u(\sigma)^TW'_\sigma v(\sigma)<0, $$
where $W'_\sigma$ is the matrix with entries  $W'_\sigma(i,j)=\frac{dW_x(i,j)}{dx}(\sigma)<0$ for all $i,j$ (see \cite{tuncel} for a similar argument).
\end{proof}

\begin{prop}
$h$ is a simple pole of $\eta_{\mathcal G}(z)$.
\end{prop}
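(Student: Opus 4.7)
The plan is to establish the simple pole in three steps: first identify $\rho(h) = 1$, then deduce that $\det(I-W_z)$ has a simple zero at $z=h$, and finally combine these with the factorization (4.2) of $\eta_{\mathcal G}$.

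To locate a singularity at $h$ I would invoke Landau's theorem for Laplace--Stieltjes transforms with non-decreasing integrator: since $\eta_{\mathcal G}(z) = \int_0^\infty e^{-zR}\, dN_{\mathcal G}(x,R)$ with $R \mapsto N_{\mathcal G}(x,R)$ non-decreasing and abscissa of convergence $h$, the point $z=h$ is necessarily a singularity of the analytic extension. Because (4.2) exhibits $\eta_{\mathcal G}$ as meromorphic on $\operatorname{Re}(z)>0$ with possible poles only where $\det(I-W_z)=0$, this singularity is a pole, so $\det(I-W_h)=0$ and $1$ is an eigenvalue of the irreducible matrix $W_h$; in particular $\rho(h) \ge 1$.

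To promote this to $\rho(h)=1$, I would use the monotonicity in Lemma \ref{lemnonneg}. By (H1) the entries of $W_\sigma$ (defined via its Schur complement) tend to $0$ as $\sigma\to\infty$, so $\rho(\sigma)\to 0$. If $\rho(h)>1$, continuity and strict decrease of $\rho$ would produce, by the intermediate value theorem, some $\sigma_0 \in (h,\infty)$ with $\rho(\sigma_0)=1$; then $\det(I-W_{\sigma_0})=0$ would give $\eta_{\mathcal G}$ a pole at $\sigma_0$, contradicting the absolute convergence of the positive series $\eta_{\mathcal G}(\sigma_0)=\sum_{p} e^{-\sigma_0\ell(p)}$ at $\sigma_0>h$. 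Hence $\rho(h)=1$.

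With $\rho(h)=1$ in hand, Perron--Frobenius for the irreducible $W_h$ says this eigenvalue is algebraically simple and is the unique eigenvalue of $W_h$ equal to $1$. By standard analytic perturbation theory for simple eigenvalues, $\rho$ extends analytically to a neighborhood of $h$ and there is a local factorization
$$\det(I-W_z) = (1-\rho(z))\,g(z),$$
with $g$ analytic and $g(h)\neq 0$. Since $\rho'(h)<0$ by Lemma \ref{lemnonneg}, $1-\rho(z)$ has a simple zero at $z=h$, hence so does $\det(I-W_z)$. From (4.2), $\eta_{\mathcal G}$ has a pole of order at most one at $h$, and Landau's theorem rules out this pole being removable, forcing $\phi(h)\neq 0$ and making the pole simple.

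I expect the main obstacle to be the middle step --- matching the algebraic quantity $\rho(W_h)$ with the analytic datum $h$ (the abscissa of convergence). The other inputs (Landau's theorem for positive integrators, simplicity of the Perron eigenvalue, and analytic perturbation of simple eigenvalues) are standard and should slot in without difficulty.
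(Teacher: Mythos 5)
Your argument takes essentially the same route as the paper: factor $\det(I-W_z)$ using the simple Perron eigenvalue $\rho(z)$, exploit $\rho'(h)<0$ from Lemma \ref{lemnonneg}, and conclude via the factorization (4.2). You also fill in two points the paper merely asserts --- that $\rho(h)=1$ (the paper defers this to the proof of the next proposition, using the same pole-at-$\sigma_0>h$ contradiction you give) and that $\phi(h)\neq 0$ (for which your appeal to Landau's theorem is a clean justification) --- so your version is a more careful rendering of the same idea.
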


\begin{proof}
For $z$ in a neighbourhood of $h$, we denote by
$\rho(z)$ the perturbed eigenvalue of $W_z$. We can write $\det(I-W_z)=(1-\rho(z))\Pi_{i=2}^k(1-\lambda_i(z))$, where the $\lambda_i(z)$ denote the other eigenvalues of $W_z$. 
Since the $\lambda_i(z)$ are bounded away from 1 for $z$ near $h$ (by the Perron-Frobenius theorem and standard perturbation theory), $\phi(h) \neq 0$
and  $\rho'(h)\neq 0$ (by Lemma \ref{lemnonneg}),
we can conclude that $(z-h)\eta_{\mathcal G}(z)$ converges to
a non-zero constant,  as $z$ tends to $h$.
\end{proof}

\begin{prop}
$\eta_{\mathcal G}(z)$ has no poles other than $h$ on the line $Re(z)=h$.
\end{prop}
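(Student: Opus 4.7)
The plan is to argue by contradiction, converting a spurious pole into an arithmetic constraint on closed path lengths that violates (H3). Suppose $\eta_{\mathcal G}(z)$ has a pole at $z_0 = h + it$ with $t \neq 0$. Via the factorization (4.2), this forces $\det(I - W_{h+it}) = 0$, so the finite $k \times k$ matrix $W_{h+it}$ has $1$ as an eigenvalue (for any $k$ large enough to make the construction valid).

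The key observation is that $W_h$ is irreducible and non-negative with Perron eigenvalue $\rho(h) = 1$, established in the preceding proposition, while the entrywise bound $|W_{h+it}(i,j)| \leq W_h(i,j)$ holds termwise on the defining series. I would then invoke a Wielandt-type refinement of the Perron--Frobenius theorem: any complex matrix dominated entrywise by an irreducible nonnegative matrix $B$ and having an eigenvalue of modulus equal to the Perron eigenvalue of $B$ must be of the form $D B D^{-1}$ for some diagonal $D$ with unit-modulus entries. This gives $W_{h+it} = D W_h D^{-1}$, and hence $W_{h+it}^n = D W_h^n D^{-1}$ for every $n \geq 1$.

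Next I would unpack these matrix powers combinatorially. Each $W_z^n(i,j)$ is an absolutely convergent sum, over paths $p$ in $\mathcal G$ from edge $e_i$ to edge $e_j$ (grouped by the $n$ intermediate visits to $\{e_1,\dots,e_k\}$), of terms $e^{-z(\ell(p)-\ell(e_i))}$. The identity $W_{h+it}^n(i,j) = d_i d_j^{-1} W_h^n(i,j)$ together with the triangle inequality forces every contributing path to satisfy $e^{-it(\ell(p)-\ell(e_i))} = d_i d_j^{-1}$, since a sum of complex numbers achieves the sum of their moduli only when all terms share a common argument. Ranging over $n$ covers all paths from $e_i$ to $e_j$ in $\mathcal G$, so this phase constraint is completely general.

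Finally I would convert this into a constraint on closed paths. Specializing to $i = j = 1$, for any closed path $c$ in $\mathcal G$ that uses $e_1$ I can cyclically rotate so that $c$ starts with $e_1$, then append a final copy of $e_1$ (valid since the rotated path ends at $i(e_1)$); this yields a path from $e_1$ to $e_1$ of length $\ell(c) + \ell(e_1)$, and the phase constraint collapses to $\ell(c) \in \tfrac{2\pi}{t}\mathbb Z$. For an arbitrary closed path $c_0$, hypothesis (H2) supplies connecting paths $p, q$ between a vertex of $c_0$ and a vertex of some fixed closed path $c_1$ through $e_1$; then $c_1 p c_0 q$ and $c_1 p q$ are both closed paths through $e_1$, and subtracting their lengths gives $\ell(c_0) \in \tfrac{2\pi}{t}\mathbb Z$. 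Since closed path lengths are positive, they all lie in $\tfrac{2\pi}{|t|}\mathbb N$, contradicting (H3). The main obstacle I expect is the bookkeeping in the third paragraph: carefully matching the entries of $W_z^n$ to actual paths in $\mathcal G$ (accounting for the $\ell(e_i)$ offset and for the fact that consecutive factors in a matrix product share their boundary edge), and ensuring the equality-of-moduli argument applies term-by-term to each absolutely convergent defining series rather than just to the finite matrix product.
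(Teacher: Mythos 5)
Your argument is correct and follows the same broad strategy as the paper (Wielandt similarity $W_{h+it}=DW_hD^{-1}$, then extract a phase constraint on closed path lengths contradicting (H3)), but the bookkeeping in the second half is genuinely different. The paper fixes the closed path $c$ first and then \emph{enlarges} the truncation size to some $k_c>k$ so that every edge of $c$ lies in the core block; this lets the term $e^{-(h+it)\ell(c)}$ appear directly in $W_{h+it}^{n}(a,a)$ for the new $k_c\times k_c$ matrix, at the cost of having to re-run the Wielandt argument on a matrix whose size depends on $c$. You instead keep $k$ fixed and observe that the entries of $W_z^n(i,j)$ already sum over \emph{all} paths in $\mathcal G$ from $e_i$ to $e_j$ (excursions outside the core are packaged into $B_z(I-D_z)^{-1}C_z$), so ranging over $n$ gives the phase constraint for every such path; you then use the cyclic-rotation trick to handle closed paths through $e_1$ and (H2) plus a subtraction of lengths to reduce an arbitrary closed path to that case. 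The paper's version is shorter on the page because the extension to arbitrary $c$ is absorbed into ``choose $k_c$''; yours avoids varying the matrix size but pays with the extra (H2) reduction step. Both are sound; your version also makes explicit the equality-in-triangle-inequality reasoning that the paper leaves implicit in the phrase ``this can only hold if.'' One small point worth stating explicitly when you write this up: Wielandt's theorem in its full form gives $W_{h+it}=e^{i\phi}DW_hD^{-1}$ with $|\lambda|=\rho(W_h)e^{i\phi}$; since your eigenvalue is literally $1$ and $\rho(W_h)=1$, the extraneous phase $e^{i\phi}$ is $1$, which is what makes the constraint $e^{-it(\ell(p)-\ell(e_i))}=d_id_j^{-1}$ come out cleanly without an additional global rotation.
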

\begin{proof}
Suppose for a contraction that  there exists another pole at $h+it$ ($t\neq 0$). Let $c$ be any closed path and choose an integer $k_c>k$ such that the edges of $c$ have index smaller than $k_c$. Then construct the $k_c\times k_c$ matrices $W_z$. From equation (4.2) we see that $\det(I-W_{h+it})=0$, and thus  
$1$ is an eigenvalue for $W_{h+it}$ and $W_h$. Furthermore, we  can see that $\rho(W_h)=1$ since otherwise  $\eta_{\mathcal G}(z)$ has a pole at $c>h$, contradicting Definition \ref{gentropydef}.

Next observe that $|W_{h+it}(a,b)|\leq W_h(a,b)$ for all $1\leq a,b \leq k$. Since $\rho(W_{h+it})\geq 1 =\rho(W_h)$, we can apply Wielandt's theorem (see \cite{gant}) which allows us to conclude that $\rho(W_{h+it})=\rho(W_h)=1$ and that there exists a diagonal matrix $D$, whose non-zero entries have unit modulus such that $W_{h+it}=DW_h D^{-1}$, and thus for all $n$ we have $W_{h+it}^n=DW_h^n D^{-1}$.

Suppose that the closed path $c$ begins with some edge $e_a$ and consists of $n$ edges. One can check that $W_{h+it}^n(a,a)=W_h^n(a,a)$ (since $W_{h+it}^n=DW_h^n D^{-1}$) and that $e^{(h+it)\ell(c)}$ is one of the terms in the left hand sum. However, this can only hold if $t$ is such that $\ell(c)t=2\pi m_c$ for some non-zero integer $m_c$.  As $c$ was arbitrary, the above construction implies that for all closed paths $c$, $\ell(c)\in d \mathbb N$ with $d = 2\pi/t$ which contradicts (H3).
\end{proof}

\section{Proof of Theorem \ref{asyminfinite}}

We can complete the proof using a similar approach to Parry in \cite{parry}, where he considered only finite matrices. 
 In particular, we will use the 
following formulation of the Ikehara--Wiener Tauberian theorem \cite{ellison} applied to our counting function, $N_{\mathcal G}(x,R)$. 

 \begin{thm}[Ikehara--Wiener Tauberian theorem]\label{tauberian}
 %Let $A(R)$ be a non-negative, monotonic, non-decreasing function of $R$ d%efined for $0\leq R <\infty$. 
Let $A: \mathbb R^+ \to \mathbb R^+$ be a 
 monotonic, non-decreasing function and formally
denote  $\eta(z):=\int_0^\infty e^{-zR}dA(R)$, for $z \in \mathbb C$. Then suppose that $\eta(z)$ has the following properties:
 
 \begin{enumerate}
     \item there exists some $a>0$ such that $\eta(z)$ is analytic on $Re(z) > a$;
     %We shall refer to $a$ as the abscissa of convergence of $\eta(z)$;
     \item $\eta(z)$ has a meromorphic extension to a neighbourhood of the  half-plane $Re(z)\geq a$;
     \item $a$ is a simple pole for $\eta(z)$, i.e.,  $C=\lim_{\epsilon\searrow 0}(z-a)\eta(z) > 0$; and 
     \item the extension of $\eta(z)$ has no poles on the line $Re(z)=a$ other than $a$.   
     \end{enumerate}
 Then $A(R)\sim Ce^{aR}$ as $R\rightarrow +\infty$.

\end{thm}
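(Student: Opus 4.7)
The plan is to follow the Wiener--Ikehara scheme, splitting the argument into an analytic step that controls $\eta$ near the critical line $Re(z)=a$, and a Tauberian step that uses the monotonicity of $A$ to turn this control into the pointwise asymptotic. I would first renormalise by setting $B(R):=e^{-aR}A(R)$ and introducing
$$g(z):=\eta(z+a)-\frac{C}{z}.$$
Hypothesis (2) supplies a meromorphic extension of $\eta(z+a)$ to a neighbourhood of $\{Re(z)\geq 0\}$, hypothesis (4) rules out boundary poles away from the origin, and subtracting $C/z$ with the residue prescribed by (3) removes the remaining singularity at $z=0$. Thus $g$ extends holomorphically to an open neighbourhood $U$ of $\{Re(z)\geq 0\}$, and the goal reduces to proving $B(R)\to C$.

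Next, following Newman's treatment of the analytic theorem (as presented in \cite{ellison}), I would extract from the analyticity of $g$ the convergence of an auxiliary improper integral that encodes $B(R)-C$. The mechanism is Cauchy's theorem applied to an integrand of the form $\widetilde{g}(z)\,e^{zT}\!\left(\tfrac{1}{z}+\tfrac{z}{M^2}\right)$ on the boundary of the half-disc $\{|z|\leq M,\,Re(z)\geq 0\}$, with a slight deformation into $U$ along the imaginary axis so as to avoid crossing the boundary. The crucial observation is that the Newman kernel $\tfrac{1}{z}+\tfrac{z}{M^2}$ has modulus $2Re(z)/M^2$ on $|z|=M$, producing the cancellation on the large semicircle needed to bound the integral without assuming any decay of $g$ along $Re(z)=0$. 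Letting $T\to\infty$ and then $M\to\infty$ then yields the desired integral convergence.

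Finally, the monotonicity of $A$ promotes this integral convergence to the pointwise asymptotic. For $R\leq u\leq R+h$ one has
$$e^{-ah}B(R)\leq B(u)\leq e^{ah}B(R+h),$$
so local fluctuations of $B$ are controlled by nearby values. If $\limsup_{R\to\infty}B(R)>C+\varepsilon$, then choosing $h$ small enough that $e^{-ah}(C+\varepsilon)>C+\tfrac{\varepsilon}{2}$ forces infinitely many disjoint intervals $[R_n,R_n+h]$ to contribute at least $h\varepsilon/2$ to the relevant tail integral, contradicting the Cauchy criterion for its convergence; a symmetric argument handles the $\liminf$, giving $B(R)\to C$ and hence $A(R)\sim Ce^{aR}$. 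The main obstacle is the contour estimate of the middle step: no decay of $g$ along the imaginary axis is assumed, and it is precisely the Newman auxiliary factor $\tfrac{1}{z}+\tfrac{z}{M^2}$ that supplies enough cancellation to close the estimate.
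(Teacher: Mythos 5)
The paper does not actually prove this theorem: it is quoted verbatim from the Ellisons' book \cite{ellison} and used as a black box, so there is no in-paper argument to compare yours against. Your outline is the standard Newman--Korevaar complex-analytic route to Wiener--Ikehara, and two of its three steps are sound as described: subtracting $C/z$ from $\eta(z+a)$ does remove the only boundary singularity by hypotheses (2)--(4), and the final monotonicity step (the inequality $e^{-ah}B(R)\leq B(u)\leq e^{ah}B(R+h)$ and the disjoint-intervals contradiction) is exactly the correct way to pass from convergence of the tail integral to $B(R)\to\lim$.

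The genuine gap is in the middle step. Newman's contour argument with the kernel $\frac{1}{z}+\frac{z}{M^2}$ requires the function being transformed to be \emph{bounded}: on the right semicircle $|z|=M$, $Re(z)>0$, one estimates the tail $\int_T^\infty f(t)e^{-zt}\,dt$ by $\|f\|_\infty e^{-Re(z)T}/Re(z)$ before the factor $|e^{zT}(\tfrac1z+\tfrac{z}{M^2})|=e^{Re(z)T}\cdot 2Re(z)/M^2$ produces the cancellation you describe. The Newman kernel compensates for the lack of decay of $g$ along $Re(z)=0$; it does nothing about possible unboundedness of $f(t)=B(t)-C$. Boundedness of $B(R)=e^{-aR}A(R)$ is not among the hypotheses and does not follow cheaply: from the simple pole one only gets $A(R)\leq A(0)+e^{\sigma R}\eta(\sigma)$ with $\sigma=a+1/R$, i.e.\ $B(R)=O(R)$. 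In the prime number theorem application this step is supplied externally by Chebyshev's elementary bound $\vartheta(x)=O(x)$; in the general Wiener--Ikehara setting it must be derived from monotonicity plus the analytic data (Korevaar's complex-analytic proof, and the classical Fej\'er-kernel proof via the positivity of the kernel, each devote a separate argument to exactly this). Your sketch never establishes it, and without it the estimate on the right half of the contour fails.

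A smaller point of bookkeeping: the Laplace transform of $B$ is $\int_0^\infty e^{-zt}B(t)\,dt=\eta(z+a)/(z+a)$ up to a constant, whose residue at $z=0$ is $C/a$, so the integral your contour argument would show to converge is $\int_0^\infty\bigl(B(t)-\tfrac{C}{a}\bigr)\,dt$, giving $B(R)\to C/a$ rather than $C$. (Testing $A(R)=e^{aR}$, for which $\eta(z)=a/(z-a)$ and $C=a$, shows the constant in the statement as printed should indeed be $C/a$; this is harmless for the paper's applications, where only the existence of some positive constant is used, but your write-up should be consistent about which constant the argument actually produces.)
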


From the results in the previous section we see that the $\eta_{\mathcal G}(s)$ satisfies the assumptions of Theorem \ref{tauberian} with $a=h$ and so we have proved Theorem \ref{asyminfinite}. 
%\ref{asyminfinite}.\footnote{after %summing over $x \in \Sigma$}

\section{Translation surfaces}

In this section we will consider a definition of volume entropy for translation surfaces and prove asymptotic results using the work developed in the previous sections.

\begin{definition}
A translation surface $X$ is a compact surface with a flat metric
except at a finite set $\Sigma = \{x_1, \ldots, x_n\}$ of singular points with cone angles $2\pi (k(x_i)+1)$, where $k(x_i) \in \mathbb N$, for $i=1, \ldots, n$.
\end{definition}

A path which does not pass through singularities is a locally distance minimizing geodesic if it is a straight line segment. This includes geodesics which start and end at singularities, known as {\it saddle connections}. We will consider 
oriented saddle connections.

Geodesics can change direction if they go through a singular point, and a  pair of line segments ending and beginning, respectively,  at a singular point form a geodesic if the angle between them is at least $\pi$.
%If a path passes through a singularity, then in order for it to be locally %minimising, it must be a straight line segment between each singularity and %any two geodesic segments of the path which are connected by a singularity, %must not form an angle of less than $\pi$ between them.  \\
%To make the above more formal, consider a singularity $x$ with cone angle 
%$2\pi(k+1)$ and two geodesic segments $s$ and $w$ which enter and exit $x$ %respectively. We can associate a relative angle $\theta(s,w)$ between them %defined by turning clockwise from the direction in which $s$ enters $x$, to %the direction in which $w$ exits $x$.
Thus a  locally distance minimising geodesic (of length $R$) on a translation surface $X$ with singularity set $\Sigma$, is a curve 
$\gamma:[0,R]\rightarrow X$ satisfying the following conditions:  
\begin{itemize}
\item There exist $0\leq t_1<...<t_n \leq R$, where $n\geq 0$, such that $\gamma(t_i)\in \Sigma$;
\item For $t_i<t<t_{i+1}$ $\gamma(t)\in X\backslash \Sigma$;
\item $\gamma:(t_i,t_{i+1}) \rightarrow X\backslash \Sigma$ is a geodesic segment (possibly a saddle connection);
\item 
The smallest angle between $\gamma|_{(t_{i-1},t_i)}$
and $\gamma|_{(t_i,t_{i+1})}$
is at least $\pi$ (cf. \cite{dankwart}, Lemma 2.1).
\end{itemize}

%From now on we shall call locally distance minimizing geodesics, geodesics %for brevity. In the definitions for volume entropy in the cases of %Riemannian manifolds and graphs, one looks at the exponential growth rate of %the volume of a ball in the universal cover of the object. If one were to %consider a ball on the object itself,  the volume of the ball wouldn't count %every geodesic, for example, those which loop around themselves multiple %times. In the case of translation surfaces, we can count all geodesics less %than or equal to a certain length by using the fact that geodesics satisfy %the conditions in the above proposition. We will now proceed to give a %formula for the total volume of all geodesics of length less than or equal %to $R$, emanating from a singularity but let us first introduce some %notation. 
Let ${\mathcal{S}}=\{s_1,s_2,...\}$ be the set of oriented saddle connections 
ordered by non-decreasing lengths.

\begin{definition}
We define a saddle connection path $p
= (s_{i_1},...,s_{i_n})$  to be a finite string of oriented saddle collections $s_{i_1},...,s_{i_n}$ which form a geodesic path.

%\begin{enumerate}
%\item
%the singularity $t(s_{i_j})$ at the end of the saddle connection $s_{i_j}$, is the same as $i(s_{i_{j+1}})$ at start of saddle connection $s_{i_{j+1}}$ for $1\leq j < n$; 
%\item 
% the saddle connections 
%$s_{i_j}$ and 
% $s_{i_{j+1}}$ have an angle between them of at least $\pi$,  for each $1\leq j < n-1$; 
%\end{enumerate}

\end{definition}

We denote by $\ell(p) = \ell(s_1)+\ell(s_2)+\cdots +\ell(s_n)$ the sum of the lengths of the constituent saddle connections.
We let $i(p), t(p) \in \Sigma$
denote the initial and terminal singularities, respectively, of the 
saddle connection path  $p$.  

\begin{example}[Square tiled surfaces \cite{schmithusen}]
We can consider the square-tiled surfaces by identifying opposite sides of arrangements of a finite number of copies of the unit square (Figure \ref{fig:squaretiles}).
The values of the lengths of the saddle connections are of the form 
$\{\sqrt{n^2 + m^2} \hbox{ : } (n,m) \in \mathbb Z^2 - (0,0) \hbox{ co-prime}\}$.
\end{example}

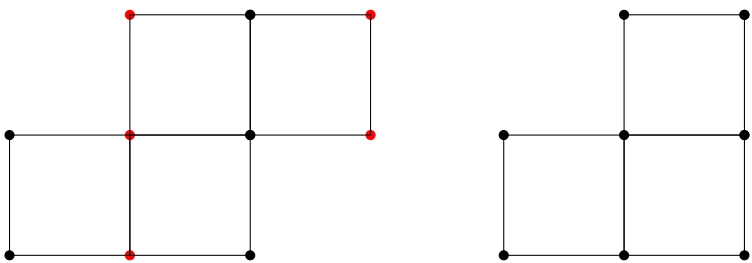
\begin{figure}[ht]
\begin{center}

\begin{tikzpicture}[scale=0.40]
  \draw [fill, black] (0,0) circle (0.15);
  \draw [fill, black] (0,4) circle (0.15);
    \draw [fill, black] (8,0) circle (0.15);
  \draw [fill, black] (8,4) circle (0.15);
    \draw [fill, black] (8,8) circle (0.15);
      \draw [fill, red] (4,0) circle (0.15);
      \draw [fill, red] (4,4) circle (0.15);
        \draw [fill, red] (4,8) circle (0.15);
  \draw [fill, red] (12,4) circle (0.15);
   \draw [fill, red] (12,8) circle (0.15);
     \draw [fill, black] (8,4) circle (0.15);
    \draw [fill, black] (8,8) circle (0.15);
  \draw (0,4) -- (4,4) -- (4,0) -- (0,0)--(0,4);
   \draw (4,4) -- (8,4) -- (8,0) -- (4,0)--(4,4);
 \draw (4,8) -- (8,8) -- (8,4) -- (4,4)--(4,8);
  \draw (8,8) -- (12,8) -- (12,4) -- (8,4)--(8,8);
 %     \node at (-6.5,0) {$1$};       
 \end{tikzpicture}
 \hskip 1.5cm
 \begin{tikzpicture}[scale=0.40]
  \draw [fill, black] (0,0) circle (0.15);
  \draw [fill, black] (0,4) circle (0.15);
    \draw [fill, black] (8,0) circle (0.15);
  \draw [fill, black] (8,4) circle (0.15);
    \draw [fill, black] (8,8) circle (0.15);
      \draw [fill, black] (4,0) circle (0.15);
      \draw [fill, black] (4,4) circle (0.15);
        \draw [fill, black] (4,8) circle (0.15);
%  \draw [fill, red] (12,4) circle (0.15);
 %  \draw [fill, red] (12,8) circle (0.15);
     \draw [fill, black] (8,4) circle (0.15);
    \draw [fill, black] (8,8) circle (0.15);
  \draw (0,4) -- (4,4) -- (4,0) -- (0,0)--(0,4);
   \draw (4,4) -- (8,4) -- (8,0) -- (4,0)--(4,4);
 \draw (4,8) -- (8,8) -- (8,4) -- (4,4)--(4,8);
%  \draw (8,8) -- (12,8) -- (12,4) -- (8,4)--(8,8);
 %     \node at (-6.5,0) {$1$};       
 \end{tikzpicture}
 \end{center}
 \caption{(i) A square tiled surface formed from four tiles; (ii) a square tiled surface formed from  three tiles.}
 \label{fig:squaretiles}
\end{figure}

%We now turn to the details.
We now turn our attention to defining a notion of volume entropy for translation surfaces in terms of the growth of the volume of a ball as its radius tends to infinity. By analogy with the definition of volume entropy for Riemannian manifolds (Definition 1.1) we can consider the rate of growth of balls in the universal cover $\widetilde X$ of $X$.

%We then define volume entropy for %translation surfaces as follows.

\begin{definition}\label{voltrans}
Let $ \tilde x\in \widetilde X$ 
and consider a ball $B(\tilde x, R) \subset \widetilde X$ of radius $R>0$ with centre $\widetilde x$. 
 We define the volume entropy of $X$ to be
\[h=h(X):=\limsup_{R\rightarrow +\infty} \frac{1}{R}\log \hbox{\rm Vol}_{\widetilde X}(B(\tilde x, R))\] 
where $\hbox{\rm Vol}_{\widetilde X}$ denotes the natural volume on 
$\widetilde X$.
\end{definition}

%%%%%%%%%%%%%%%%%%%%%%

Definition \ref{voltrans} is closely related to the definition of Dankwart
\cite{dankwart}, which was formulated in terms of orbital counting.
As in the case of the definitions of volume entropy for Riemannian manifolds and finite metric graphs, $h$ is independent of the choice of $\tilde x$.  For convenience, we can take $\tilde x$ to be the lift of a singularity $x \in \Sigma$.

It is useful to interpret this definition in terms of $X$ rather than $\widetilde X$.
To this end we have the following definition.

\begin{definition}\label{dank}
Let $m_{R}(y)$ be number of distinct geodesic arcs in $X$ from $x$ to $y$ of length at most $R$.
% and then write
%$\hbox{\rm Vol}(B(x,R)) = \int_X m_{R}(y) d%\hbox{\rm Vol}(y)$.
\end{definition}

We can now rewrite
$\hbox{\rm Vol}_{\widetilde X}(B(\tilde x, R)) =\int_X m_R(y) d\hbox{\rm Vol}_{X}(y)$
(see Figure 3).   For economy of notation we  will write $V(x, R):= \hbox{\rm Vol}_{\widetilde X}(B(\tilde x, R))$.\\

\begin{figure}[ht]\label{overlap}
\begin{center}
\begin{tikzpicture}[scale=0.8]
  \draw [fill, black] (2,-2) circle (0.1);
   \draw [fill, black] (-2,2) circle (0.1);
    \draw [fill, black] (2,2) circle (0.1);
     \draw [fill, black] (-2,-2) circle (0.1);
  \draw (-4,2) -- (-2,2) -- (-2,4) -- (2,4) -- (2,2)--(4,2)--(4,-2)--(2,-2)--(2,-4)--(-2,-4)--(-2,-2)--(-4,-2)--(-4,2);
   \draw [red,thick,domain=-90:180] plot ({2-1.5*cos(\x)}, {2 -1.5*sin(\x)});
 \draw [pink,thick,domain=-90:180] plot ({-2+1.5*cos(\x)}, {-2 +1.5*sin(\x)});
  \draw [orange,thick,domain=-180:90] plot ({-2+1.5*cos(\x)}, {2 +1.5*sin(\x)});
   \draw [magenta,thick,domain=0:270] plot ({2+1.5*cos(\x)}, {-2 +1.5*sin(\x)});
%      \node at (0,0) {$4$};
 %           \node at (-6.5,3) {$\frac{1}{2}$};
 %   \node at (-6.5,-3) {$\frac{1}{2}$};
 % \draw[<->,red] (-6,2) -- (-6,-2);
 %  \draw[<->,red] (-6,-2) -- (-6,-4);
  %  \draw[<->,red] (-6,2) -- (-6,4);
   %   \node at (0,-6.5) {$1$};
   %         \node at (-3,-6.5) {$\frac{1}{2}$};
  %  \node at (3,-6.5) {$\frac{1}{2}$};
 % \draw[<->,red] (2,-6) -- (-2,-6);
 %  \draw[<->,red] (-2,-6) -- (-4,-6);
 %   \draw[<->,red] (2,-6) -- (4,-6);
 \end{tikzpicture}
 \hskip 1.5cm
\begin{tikzpicture}[scale=0.8]\label{Overlap1}
  \draw [fill, black] (2,-2) circle (0.1);
   \draw [fill, black] (-2,2) circle (0.1);
    \draw [fill, black] (2,2) circle (0.1);
     \draw [fill, black] (-2,-2) circle (0.1);
  \draw (-4,2) -- (-2,2) -- (-2,4) -- (2,4) -- (2,2)--(4,2)--(4,-2)--(2,-2)--(2,-4)--(-2,-4)--(-2,-2)--(-4,-2)--(-4,2);
   \draw [red,thick,domain=-35:125] plot ({2-3.5*cos(\x)}, {2 -3.5*sin(\x)});
 \draw [pink,thick,domain=-35:125] plot ({-2+3.5*cos(\x)}, {-2 +3.5*sin(\x)});
  \draw [orange,thick,domain=-125:35] plot ({-2+3.5*cos(\x)}, {2 +3.5*sin(\x)});
   \draw [magenta,thick,domain=55:215] plot ({2+3.5*cos(\x)}, {-2 +3.5*sin(\x)});
      \node at (0,0) {$4$};
      \node at (3,0) {$2$};
            \node at (-3,0) {$2$};
                    \node at (0,3) {$2$};
                            \node at (0,-3) {$2$};
  \node at (0.8,0.8) {$3$};  
   \node at (0.8,-0.8) {$3$}; 
    \node at (-0.8,0.8) {$3$}; 
     \node at (-0.8,-0.8) {$3$};                          
 %           \node at (-6.5,3) {$\frac{1}{2}$};
 %   \node at (-6.5,-3) {$\frac{1}{2}$};
 % \draw[<->,red] (-6,2) -- (-6,-2);
 %  \draw[<->,red] (-6,-2) -- (-6,-4);
  %  \draw[<->,red] (-6,2) -- (-6,4);
   %   \node at (0,-6.5) {$1$};
   %         \node at (-3,-6.5) {$\frac{1}{2}$};
  %  \node at (3,-6.5) {$\frac{1}{2}$};
 % \draw[<->,red] (2,-6) -- (-2,-6);
 %  \draw[<->,red] (-2,-6) -- (-4,-6);
 %   \draw[<->,red] (2,-6) -- (4,-6);
 \end{tikzpicture}
 \end{center}
 \caption{(i) A small ball centred at a singularity; (ii) As the radius $R$ increases the ball overlaps with itself (and the values of the multiplicity function $m_R(\cdot)$ are indicated).}
 \end{figure}
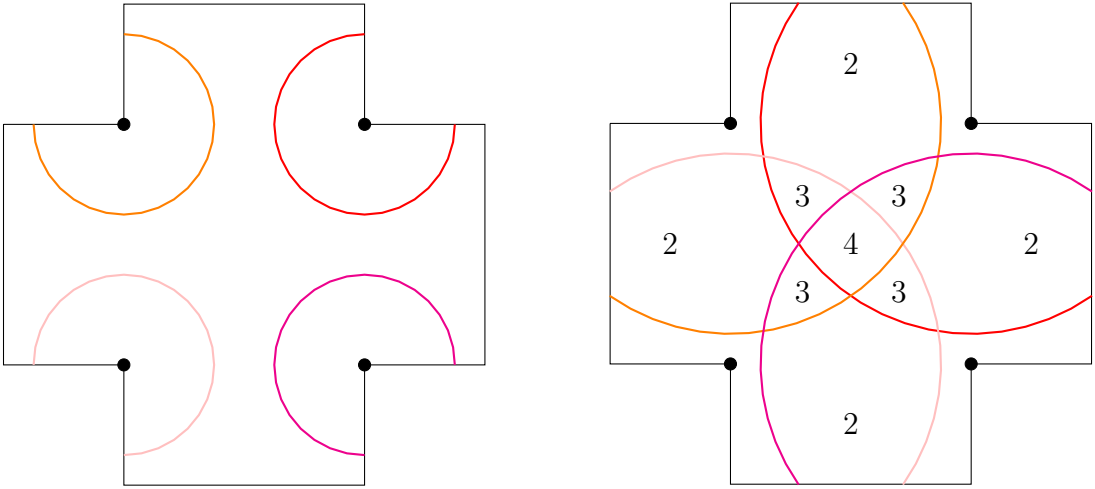

%%%%%%%%%%%%%%%%%%%%%%%%%%%%%%
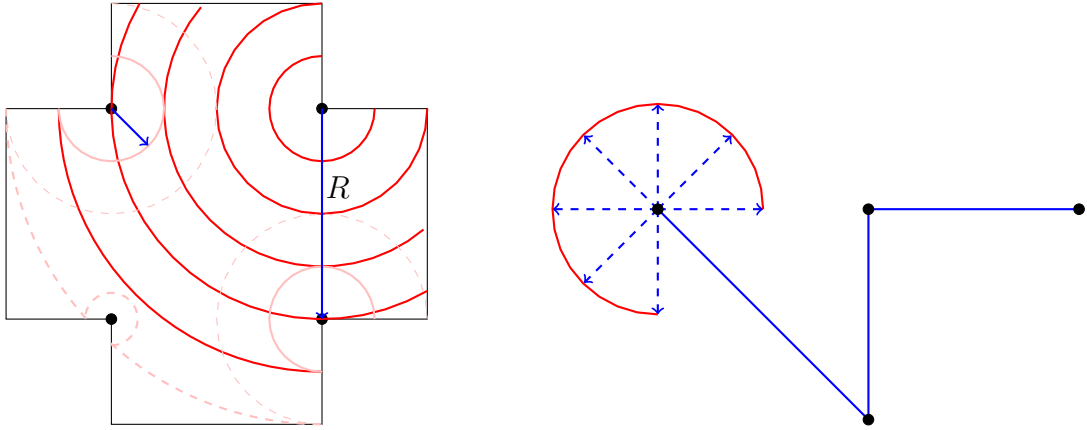
\begin{figure}[ht]\label{geopic}
\begin{center}
\begin{tikzpicture}[scale=0.7]
  \draw [fill, black] (2,-2) circle (0.1);
   \draw [fill, black] (-2,2) circle (0.1);
    \draw [fill, black] (2,2) circle (0.1);
     \draw [fill, black] (-2,-2) circle (0.1);
  \draw (-4,2) -- (-2,2) -- (-2,4) -- (2,4) -- (2,2)--(4,2)--(4,-2)--(2,-2)--(2,-4)--(-2,-4)--(-2,-2)--(-4,-2)--(-4,2);
   \draw [pink,thick,dashed, domain=48:90] plot ({2-6.0*cos(\x)}, {2 -6.0*sin(\x)});
    \draw [pink,thick,dashed, domain=0:42] plot 
    ({2-6.0*cos(\x)}, {2 -6.0*sin(\x)});
        \draw [red,thick, domain=0:90] plot ({2-5.0*cos(\x)}, {2 -5.0*sin(\x)});
      \draw [red,thick,domain=-30:120] plot ({2-4.0*cos(\x)}, {2 -4.0*sin(\x)});
     \draw [red,thick,domain=-40:130] plot ({2-3.0*cos(\x)}, {2 -3.0*sin(\x)});
   \draw [red,thick,domain=-90:180] plot ({2-2.0*cos(\x)}, {2 -2.0*sin(\x)});
      \draw [red,thick,domain=-90:180] plot ({2-1.0*cos(\x)}, {2 -1.0*sin(\x)});
 \draw[pink,thick,domain=90:-180] plot ({2-1.0*cos(\x)}, {-2 -1.0*sin(\x)});
 \draw[pink,thick,domain=270:0] plot ({-2-1.0*cos(\x)}, {2 -1.0*sin(\x)});
 \draw[pink,dashed,domain=90:-180] plot ({2-2.0*cos(\x)}, {-2 -2.0*sin(\x)});
 \draw[pink,dashed, domain=270:0] plot ({-2-2.0*cos(\x)}, 
 {2 -2.0*sin(\x)});
  \draw [pink,thick,dashed,domain=-270:0] plot ({-2-0.5*cos(\x)}, {-2 -0.5*sin(\x)});
%      \draw [red,thick,domain=270:90] %plot ({2-1.0*cos(\x)}, {2 %-1.0*sin(\x)});
%      \node at (0,0) {$4$};
 %           \node at (-6.5,3) {$\frac{1}{2}$};
 %   \node at (-6.5,-3) {$\frac{1}{2}$};
 % \draw[<->,red] (-6,2) -- (-6,-2);
 %  \draw[<->,red] (-6,-2) -- (-6,-4);
  %  \draw[<->,red] (-6,2) -- (-6,4);
   %   \node at (0,-6.5) {$1$};
   %         \node at (-3,-6.5) {$\frac{1}{2}$};
  %  \node at (3,-6.5) {$\frac{1}{2}$};
 % \draw[<->,red] (2,-6) -- (-2,-6);
 %  \draw[<->,red] (-2,-6) -- (-4,-6);
 %   \draw[<->,red] (2,-6) -- (4,-6);
 \draw[->,thick, blue] (2,2) -- (2,-2);
 \draw[->,thick, blue] (-2,2) -- (-1.3,1.3);
 \node at (2.3,0.5) {$R$};
 \end{tikzpicture}
  \hskip 1.5cm
 \begin{tikzpicture}[scale=0.7]
 \draw[-,thick, blue] (2,2) -- (2,-2);
  \draw[-,thick, blue] (2,-2) -- (-2,2);
 \draw[->,thick, dashed, blue] (-2,2) -- (-2,0);
  \draw[->,thick, dashed, blue] (-2,2) -- (-2,4);
   \draw[->,dashed,thick, blue] (-2,2) -- (0,2);
   \draw[->,dashed,thick, blue] (-2,2) -- (-4,2);
   \draw[->,dashed,thick, blue] (-2,2) -- (-3.4,0.6);
    \draw[->,dashed,thick, blue] (-2,2) -- (-3.4,3.4);
    \draw[->,dashed,thick, blue] (-2,2) -- (-0.6,3.4);
   \draw[-,thick, blue] (2,2) -- (6,2);
    \draw [fill, black] (2,2) circle (0.1);
    \draw [fill, black] (6,2) circle (0.1);
    \draw [fill, black] (2,-2) circle (0.1);
    \draw [fill, black] (-2,2) circle (0.1);
    \draw [red,thick,domain= -180:90] plot ({-2-2.0*cos(\x)}, {2 -2.0*sin(\x)});
 \end{tikzpicture}
 \end{center}
 \caption{(i) The radii of $B(x,R)$ are concatenations of saddle connections followed by a radial line segment from a singularity; (ii) A heuristic figure illustrating that the boundary of $B(x,R)$ will consist of the union of circular arcs centred on singularities reached via concatenations of saddle connections}
 \end{figure}

%Consider  those  geodesics arcs which start %at a given singularity $x$, say,  and have %length less than or equal to $R$. As an %alternative to  considering the universal %cover, it  is natural to count the total 
%volume $\hbox{\rm Vol}(B(x,R))$ by %multiplicity according to different %geodesic paths

Let $x\in \Sigma$ be a singularity, then we define 
$$\pi(x,R) := 
\left\{p  \hbox{ : } 
i(p) = x \hbox{ and }l(p) \leq R
\right\}
 $$
 to be the number of saddle connection paths starting at $x$ of length less than or equal to $R$.

%We can now put the above intuitive notion formally as follows.
\begin{lemma}
Let $X$ be a translation surface and fix a singularity $x\in \Sigma$ and let $2\pi(k(x)+1)$ be the cone angle of $x$. Then for $R>0$,
\[V(x,R)=(k(x)+1)\pi R^2 + \sum_{p\in \pi(x,R)} k(t(p))\pi (R-\ell(p))^2,  \]
where  the singularity at the end of path $p$ has cone angle $2\pi (k(t(p))+1)$.
\end{lemma}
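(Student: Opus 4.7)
The plan is to decompose $B(\tilde x, R) \subset \widetilde X$ into a disjoint (modulo measure zero) union indexed by saddle connection paths starting at $\tilde x$, and to evaluate each piece as a flat cone sector by unfolding. The key structural input is that any locally distance-minimising curve on a translation surface is a concatenation of Euclidean straight segments, joined at singularities by bends of angle $\geq \pi$ on either side. Hence the geodesic from $\tilde x$ to any $y \in B(\tilde x, R)$ is determined by a (possibly empty) saddle connection path $p$ starting at $\tilde x$ with $\ell(p) \leq R$, together with a final straight segment from $t(p)$ of length $s \in [0, R - \ell(p)]$ whose direction at $t(p)$, when $p$ is non-empty, makes an angle $\geq \pi$ on either side with the direction of arrival. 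Because each cone angle $2\pi(k(x_i)+1)$ is at least $4\pi$, the space $\widetilde X$ has non-positive curvature in the Alexandrov sense, so this geodesic is unique away from a measure-zero cut locus.

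Set $D_0 \subset B(\tilde x, R)$ to be the locus whose geodesic from $\tilde x$ is a single straight segment (the empty-path case), and for each $p \in \pi(x, R)$ let $S_p \subset B(\tilde x, R)$ be the locus whose geodesic traverses exactly the saddle connections of $p$ and then a straight segment from $t(p)$. By uniqueness of minimising geodesics, $B(\tilde x, R) = D_0 \sqcup \bigsqcup_{p \in \pi(x,R)} S_p$ modulo a measure-zero set. Each piece unfolds isometrically into the Euclidean plane: $D_0$ unfolds at $\tilde x$ to a full sector of angle $2\pi(k(x)+1)$ and radius $R$, contributing area $\pi(k(x)+1)R^2$, while for each $p$ the valid outgoing directions at $t(p)$ (those making angle $\geq \pi$ on either side with the incoming saddle connection) form an arc of angular measure $2\pi(k(t(p))+1) - 2\pi = 2\pi k(t(p))$, so $S_p$ unfolds to a sector of angle $2\pi k(t(p))$ and radius $R - \ell(p)$, contributing area $\pi k(t(p))(R-\ell(p))^2$. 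Summing the contributions of $D_0$ and of all $S_p$ gives the claimed identity.

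The principal technical issue is making the decomposition and unfolding precise: one must verify that the pieces $S_p$ are pairwise disjoint off a measure-zero set, which follows from uniqueness of minimising geodesics in the CAT(0) lift, and that the unfolding map from each flat sector into $\widetilde X$ is an isometry onto its image apart from the measure-zero set of radii on which the interior of the final segment meets another singularity. The latter uses that from any base point there are only countably many saddle connections, so in polar coordinates these exceptional radii sit on a countable union of rays. With these verifications the flat areas computed in the plane coincide with the volumes in $\widetilde X$, and the lemma follows.
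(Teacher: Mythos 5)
Your proof is correct and follows essentially the same approach as the paper: decompose $B(\tilde{x},R)$ according to the saddle-connection path the minimising geodesic follows, unfold each piece to a Euclidean sector of the appropriate cone angle ($2\pi(k(x)+1)$ for the initial piece, $2\pi k(t(p))$ for the outgoing directions at $t(p)$ after a bend of at least $\pi$ on each side), and sum the sector areas. Your write-up simply fills in details the paper's terse proof leaves implicit, notably the uniqueness of geodesics in the CAT(0) lift and the measure-zero status of the boundaries and of rays hitting further singularities.
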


\begin{proof}
The volume contributed by the geodesics starting from $x$ which do not pass through a singularity is given by $(k(x)+1)\pi R^2$, where $2\pi (k(x)+1)$ is the cone angle at $x$.
On the other hand, the contribution to the volume by those geodesics $\gamma$
which pass through one or more singularities
comes when the  geodesic leaves its last singularity at time $\ell({p}) < R$, say.
It can exit in one of $2\pi k(p)$ directions.  Then the total volume of such $\gamma$ is given by $k(t(p))\pi  (R-(\ell(p))^2$. 
\end{proof}

\noindent
We shall now prove asymptotic results for translation surfaces using the analysis developed for infinite graphs.\\

\begin{definition}
 We can associate to $X$ the countable matrix $M_0$, indexed by $\mathcal S$, 
 defined by 
$$M_0(s,s')=\begin{cases} 1 &\mbox{if }  \text{ss$'$ form a saddle connection path}, \\
0 & \mbox{otherwise. }  \end{cases} $$
For each  $z\in \mathbb C$ 
%such that $Re(s)>0$, 
we define the matrix $M_z$ by $M_z(s,s')=M_0(s,s')e^{-z\ell(s')}$ for $s,s'\in \mathcal S$.\\
\end{definition}
%We can associate to the translation surface matrices   $M_z$ ($z \in \mathbb C$) where:
%\begin{enumerate}
%\item
%the rows and columns are indexed by the oriented saddle connections; 
%\item
%the non-zero entries correspond to pairs of oriented 
%saddle connections $s$, $s'$ such that:
%\begin{enumerate}
%\item The singularity $t(s)$ at the end of $s$ is the same as that $i(s')$
%at the  start of the saddle connection $s'$; and
%\item The saddle connections $s$ and $s'$ have an angle of at least $\pi$ between them.
%\end{enumerate}
%\item The non-zero entries are  $M(s,s') = e^{-s \ell (s')}$.
%\end{enumerate}
%\medskip
In order that the matrices have the same properties that served us well for graphs, we require  specific  features  of a  translation surface.

\medskip
\noindent
{\bf Translation Hypotheses.}
Henceforth, we shall consider translation surfaces whose countable set of saddle connections is denoted by $\mathcal S$.  Moreover, we require that $\mathcal S$ and the lengths of the saddle connections satisfy the following properties:
\begin{enumerate}
\item[(T1)]
For all $\sigma > 0$ we have $\sum_{s \in \mathcal S} 
e^{-\sigma \ell(s)} < +\infty$; 
\item[(T2)] For any directed saddle connections $s, s' \in \mathcal S$ there exists a saddle connection path beginning with $s$ and ending with $s'$; and
\item[(T3)] There does not exist a
$d > 0$ such that 
$$
\{ \ell (c) \hbox{ : } c \hbox{ is a closed saddle connection path} \} \subset d \mathbb N.
$$
\end{enumerate}
We claim that the above hypotheses hold for all translation surfaces.\\

%Hypotheses $(1)$ for $\mathcal G$ follows from the finiteness of $\Sigma$.
% from well known properties of translation %surfaces (see \cite{zorichflat} for example). 
Property (T1) follows from the lower bound in  following result (see \cite{masurlower} and \cite{masurupper}).

\begin{prop}\label{growththm}
Let $X$ be a translation surface and let $N(X,L)$ denote the number of saddle connections on $X$ of length less than or equal to $L$. Then there exists constants $0 < c_1<c_2<\infty$ such that
\[c_1L^2\leq N(X,L)\leq c_2L^2,\]
for $L$ sufficiently large.
\end{prop}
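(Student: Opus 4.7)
This is the classical quadratic-growth result of Masur, and the upper and lower bounds call for quite different arguments, so I would treat them separately.

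For the upper bound $N(X,L)\le c_2L^2$, the plan is to work through the holonomy map $s\mapsto v(s)\in\mathbb{R}^2$, where $v(s)$ is the displacement vector of the saddle connection $s$. The key geometric input is a uniform discreteness property: if $v(s_1)$ and $v(s_2)$ are linearly independent, then the parallelogram they span has area bounded below by a positive constant depending only on $X$, since such a parallelogram embeds (at least immersively) in $X$ and hence its area cannot exceed $\mathrm{Area}(X)$. First I would verify that the saddle connections in any fixed direction $\theta$ are pairwise disjoint on $X$ away from their endpoints, so that a strip/area argument gives at most $O(L)$ of them of length at most $L$. Then, partitioning the disk of radius $L$ in $\mathbb{R}^2$ into $O(L)$ thin angular sectors of width $\Theta(1/L)$ and using the wedge-product lower bound to control the number of holonomies in each sector, one sums to obtain the desired $O(L^2)$ bound.

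For the lower bound $N(X,L)\ge c_1L^2$, I would use the existence of flat cylinder decompositions. It is known that on any translation surface the set of periodic directions (those in which $X$ decomposes into flat cylinders) is dense in the circle of directions, and each cylinder of circumference $c$ contributes $\Omega(L/c)$ parallel saddle connections of length at most $L$ along its boundary. To upgrade this linear-per-direction bound to a quadratic total I would invoke the Teichm\"uller geodesic flow: $g_t\cdot X$ returns near a fixed compact region of moduli space at $\Omega(\log L)$ well-separated times $t\in[0,\log L]$, and at each such return a new short direction on $X$ becomes visible. The cylinder decomposition in each new direction contributes a linear number of saddle connections whose holonomies lie in the disk of radius $L$, and since the directions are distinct the resulting holonomies are too, giving $\Omega(L^2)$ saddle connections in total.

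The main obstacle is the lower bound. The upper bound reduces to elementary planar geometry once the wedge-product rigidity is established. The lower bound, in contrast, genuinely requires dynamical input from the Teichm\"uller flow on moduli space; producing quadratically many saddle connections uniformly in $X$ is far from elementary and is essentially the content of the Masur papers cited here. A fully self-contained treatment along these lines would require importing the recurrence of the Teichm\"uller flow and density of periodic directions as black boxes.
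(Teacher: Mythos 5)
The paper does not prove this proposition: it is stated as a quotation of Masur's results, with the upper bound cited to \cite{masurupper} and the lower bound to \cite{masurlower}, and the authors only use the lower bound (to deduce hypothesis (T1)). So there is no internal argument to compare against; what matters is whether your sketch could stand on its own.

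Your upper-bound step contains a genuine error. You assert that two linearly independent saddle-connection holonomies span a parallelogram whose area is \emph{bounded below} by a constant $c(X)>0$, and you justify this by saying the parallelogram immerses in $X$ and hence has area at most $\mathrm{Area}(X)$. That reasoning gives an \emph{upper} bound on the wedge product, not a lower bound; nothing in it produces the uniform positive constant your sector-counting needs. Worse, the claimed lower bound is simply false for a general translation surface: the property that all pairs of non-parallel saddle connections satisfy $|v(s_1)\wedge v(s_2)|\ge c>0$ is the ``no small virtual triangles'' condition of Smillie--Weiss, which is equivalent to the Teichm\"uller geodesic orbit of $X$ being bounded and therefore fails for generic $X$. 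Masur's and Vorobets's actual upper-bound arguments apply an area bound only to a restricted family of saddle connections (e.g.\ those bounding empty triangles in a Delaunay-type decomposition) together with a more delicate counting scheme, not a blanket wedge-product lower bound. Your lower-bound sketch (cylinders plus Teichm\"uller flow recurrence producing $\Omega(\log L)$ independent short directions) is closer to Masur's actual route and is a reasonable heuristic, though, as you acknowledge, carrying it out requires importing Masur's recurrence theorem as a black box; the paper sidesteps all of this by citing the results directly.
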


To see that Hypotheses (T2) and (T3) hold for all translation surfaces, we require the following result in \cite{dankwart} which we restate for our purposes here.
\begin{prop}\label{dankg}
Let $X$ be a translation surface. 
If $s,s' \in \mathcal S$ are oriented saddle connections then  
there exists a saddle connection path which starts with $s$ and ends with $s'$. 
\end{prop}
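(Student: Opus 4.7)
The statement is a restatement of a result of Dankwart \cite{dankwart}, and my plan is to reconstruct its proof from two standard facts about translation surfaces: (i) at every singularity $v$, the set of directions in which some saddle connection emanates from $v$ is dense in the tangent circle $S^1_v$ (a classical density theorem of Masur); and (ii) at a singularity of cone angle $2\pi(k+1)$ with $k \geq 1$, the valid continuation arc — those outgoing directions forming an angle $\geq \pi$ on each side with a given incoming direction — has angular measure $2\pi k$, in particular is a nonempty open arc. Fact (ii) follows from a direct calculation: if one parametrises the cone by $\phi \in [0, 2\pi(k+1))$ so that the ``straight through'' continuation of the incoming direction lies at $\phi = 0$, the two side angles at the bend are $\phi$ and $2\pi(k+1) - \phi$, and both are $\geq \pi$ precisely when $\phi \in [\pi, 2\pi k + \pi]$.

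Write $v = t(s)$ and $w = i(s')$, and let $U_v \subset S^1_v$ (respectively $U_w \subset S^1_w$) denote the open arc of valid outgoing directions at $v$ relative to the incoming direction of $s$ (respectively the open arc of incoming directions at $w$ that can be legally followed by $s'$). By (i) and (ii) applied at $v$, one can choose a saddle connection $s_1$ leaving $v$ in a direction lying in $U_v$; at its terminus the valid continuation arc is again open by (ii) and contains saddle connection directions by (i), so one can append $s_2$, and so on. This inductively builds valid saddle connection paths extending $s$ of arbitrary combinatorial length. The remaining task is to steer this construction so that for some $n$ the final saddle connection $s_n$ terminates at $w$ with incoming direction lying in $U_w$, since then $s \, s_1 \cdots s_n \, s'$ is the sought saddle connection path.

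The principal obstacle is precisely this targeting step, and I would handle it by a spreading argument. One shows inductively that the collection of pairs (reachable singularity, terminal approach direction) achievable after $n$ valid steps starting from $s$ constitutes an open subset of $\bigsqcup_{u \in \Sigma} S^1_u$ whose projection to each $S^1_u$ becomes dense as $n \to \infty$. The inductive step combines density of saddle connections within any open arc at any singularity, from (i), with the fact that valid continuation arcs at the next singularity are open and nonempty, from (ii); since $\Sigma$ is finite, after finitely many iterations the achievable approach directions at every singularity become dense in $S^1_u$. In particular, the open arc $U_w \subset S^1_w$ is met by some achievable approach direction at $w$, which completes the construction.
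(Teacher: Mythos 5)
The paper does not prove this proposition; it is quoted from Dankwart's dissertation \cite{dankwart} with no argument supplied, so your attempt must stand on its own. Your two ingredients --- density of saddle connection directions at each singularity, and the observation that the valid continuation arc at a cone point of angle $2\pi(k+1)$ has angular measure $2\pi k$ --- are both correct (granting the paper's implicit convention that $k\geq 1$ at every genuine singularity), and they are certainly in the spirit of what such a proof must use. The inductive ``keep appending'' step is fine as far as it goes: it shows one can always continue a valid path, never that one can steer it.

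The gap is in the spreading/targeting argument, and it is not cosmetic. You assert that the set of pairs $(u,\theta)$ with $u$ a reachable singularity and $\theta$ the terminal approach direction of some length-$n$ valid path ``constitutes an open subset of $\bigsqcup_{u\in\Sigma}S^1_u$.'' This set is countable --- there are only countably many saddle connection paths, hence only countably many terminal approach directions --- so it is not open, and the topological bookkeeping you build on it (projections becoming dense as $n\to\infty$ because the sets are open and the continuation arcs are nonempty) does not get off the ground. The objects that \emph{are} open are the valid continuation arcs themselves, i.e.\ the sets of admissible \emph{outgoing} directions after a given path; but density of a union of such arcs in $\bigsqcup_u S^1_u$ is a different statement, and your inductive step does not establish it. In particular, even if outgoing directions are dense in a continuation arc, the resulting collection of \emph{incoming} directions at the next singularity is again just a countable set with no a priori control: nothing in (i) or (ii) says which sheet of $S^1_{u'}$ you arrive on, and the translation holonomy constrains the planar direction of arrival to equal the planar direction of departure, so the incoming directions you actually realise are far from arbitrary. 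The proof as written therefore never establishes that some achievable approach direction at $w=i(s')$ lies in $U_w$, which is exactly the step you flagged as ``the principal obstacle.'' To close this you would need an explicit mechanism for changing sheets at a fixed singularity (e.g.\ a lemma producing, from density, closed saddle connection paths through $w$ whose net angular advance can be made to approximate any value in $[\pi,2\pi k+\pi]$, and then composing such loops to sweep out all of $S^1_w$), not just the open/dense soft argument you sketch.
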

Hypothesis (T2) follows immediately from this fact. 

To show 
Hypothesis (T3) holds for all surfaces
we first note
 that if the lengths of all closed geodesics were an integer multiple of some constant $d$, then the  length of every saddle connection would be an integer multiple of $d/2$. To see this, let $s$ be any saddle connection on $X$. If $i(s)=t(s)$ then $s$ is a closed geodesic and so we are done. If $i(s)\neq t(s)$ then by Proposition \ref{dankg}, there exists a closed saddle connection path $c_i$ such that $c_i$ passes through $i(s)$ and that $\bar{s}c_is$ forms a saddle connection path (where $\bar{s}$ is the saddle connection $s$ with reversed orientation). Similarly, there exists a closed saddle connection path $c_t$ which starts and ends at $t(s)$, such that $sc_t\bar{s}$ forms a saddle connection path. Note that the concatenation $sc_t\bar{s}c_i$ is also a closed saddle connection path
of length $2 \ell (s) + \ell(c_t) + \ell(c_i)$ 
  and so by Hypothesis (T3), $\ell(s)\in (d/2)\mathbb N$.
Let us now assume for a contradiction that (T3) does not hold and, in particular, the above property holds for the saddle connection lengths.

Using results in \cite{smillie}, $X$ contains an embedded cylinder $C$ (the product of a circle with  an interval) whose boundaries consist of a single saddle connection or multiple parallel saddle connections.  We now aim to construct a countable family of triangles using this cylinder (Figure \ref{triangles}).
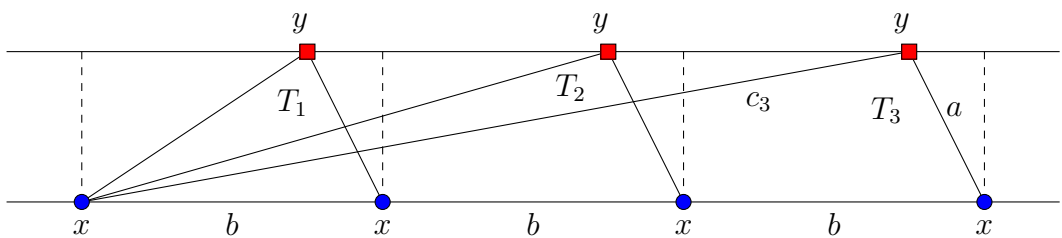
\begin{figure}[h]
\begin{center}
\begin{tikzpicture}[scale =1]
    %\draw[help lines] (0,0) grid (12,2);    
    \draw (0,0) --(11,2);
    \draw (0,0) --(7,2);
    \draw (0,0) --(3,2);
    %\draw (0,0) --(13,26/17);
    \draw (4,0) --(3,2);
    \draw (8,0) --(7,2);
    \draw (12,0) --(11,2);
    \draw (-1,0) --(13,0);
    
    \draw (-1,2) --(13,2);
    
    \draw [dashed] (0,0) --(0,2);
    \draw [dashed] (12,0) --(12,2);
    \draw [dashed] (4,0) --(4,2);
    \draw [dashed] (8,0) --(8,2);

    \draw [fill=red] (-0.1+3,-0.1+2) rectangle (0.1+3,0.1+2);
    \draw [fill=red] (-0.1+7,1.9) rectangle (0.1+7,2.1);
    \draw [fill=red] (-0.1+11,1.9) rectangle (0.1+11,2.1);
    \draw [fill=blue] (0,0) circle [radius = 0.1];
    \draw [fill=blue] (4,0) circle [radius = 0.1];
    \draw [fill=blue] (8,0) circle [radius = 0.1];
    \draw [fill=blue] (12,0) circle [radius = 0.1];
  \node[below] at (0,-0.1) {$x$};
   \node[below] at (4,-0.1) {$x$};
   \node[below] at (8,-0.1) {$x$};
   \node[below] at (12,-0.1) {$x$};
   \node[above] at (-0.1+3,2.1) {$y$};
   \node[above] at (-0.1+7,2.1) {$y$};
   \node[above] at (-0.1+11,2.1) {$y$};
  \node[below] at (2,0) {$b$};
    \node[below] at (6,0) {$b$};
        \node[below] at (10,0) {$b$};
  \node[above] at (11.6,1) {$a$};
  \node[above] at (2.8,1.0) {$T_1$};
  \node[above] at (6.5,1.2) {$T_2$};
    \node[above] at (10.7,0.9) {$T_3$};
        \node[below] at (9.0,1.6) {$c_3$};
    
\end{tikzpicture}
\end{center}
\caption{Three copies of a cylinder on $X$ with two singularities on separate boundaries represented by 
%blue 
circles and 
%the red 
squares. The corresponding triangles $T_1$, $T_2$ and $T_3$ are also drawn.} 
\label{triangles}
\end{figure}
Fix two singularities $x$ and $y$, one from each boundary. Let $b$ denote the union of saddle connections which form the boundary of the cylinder connecting $x$ to itself. Let $a$ be one of the 
 saddle connection connecting $x$ to $y$ across the cylinder such that the angle between $a$ and $b$ is acute.  Then consider the unique saddle connection $c_n$ connecting $x$ to $y$ which is defined to be the third side in a triangle $T_n$ whose other  edges are   $b$ concatenated with itself  $n$ times and a. 
 %We denote this triangle by $T_n$ and
  By hypothesis each edge has length which is an integer multiple of $d/2$.
  However, by elementary Euclidean geometry we can show that this cannot hold for all sufficiently large $n$, giving the required contradiction.
%By scaling all of the $T_n$ by a factor of %$2/d$, $T_n$'s edges will be of integer %length. 
%By applying the cosine rule we see that for %all $n$, $|\ell(c_n)^2-(n\ell(b))^2|\leq 
%\ell(a)^2$. However, by taking $n$ to be %sufficiently large, we can make the %distance between the two squares on the %left arbitrarily large which gives us a %contradiction.
\\

To derive an asymptotic estimate 
for $V(x,R)$ we  can associate the complex function
$$
\eta_X(z) = \int_0^\infty e^{-zR} dV(x,R).
$$
Let $\mathcal{P}(x) := 
\left\{p  \hbox{ : } 
i(p) = x \right\}$ denote the set of saddle connection paths starting at $x$. We can rewrite $\eta_X(z)$ as follows:
$$
\begin{aligned}
\eta_X(z)&= \int_0^\infty e^{-zR} 
\left(\frac{d}{dR}\hbox{\rm Vol}(B(x,R))\right) dR\cr
&= 
2\pi (k(x)+1) \int_0^\infty e^{-zR} R  dR
+ 2\pi  \sum_{p\in\mathcal{P}(x)}
k(t(p))
\int_{\ell(p)}^\infty e^{-zR} (R - \ell(p)) dR\cr
&= 
2\pi (k(x)+1) \int_0^\infty e^{-zR} R  dR
+ 2\pi  \sum_{p\in\mathcal{P}(x)}
k(t(p))
e^{- z \ell(p)}\int_{0}^\infty e^{-zR}R dR\cr
&= 
\frac{2\pi}{z^2} (k(x)+1)
+ \frac{2\pi}{z^2}  \sum_{p\in\mathcal{P}(x)}
k(t(p))
 e^{-z\ell(p)} \cr
 &= \frac{2\pi}{z^2} (k(x)+1)
+ \frac{2\pi}{z^2} 
 {\underline v}(z) 
 \cdot
 ( I - \widehat{M_z})^{-1}{\underline u},
\end{aligned} \eqno(6.1)
$$
where 
 $\underline u 
 =  (k(t(s_j)))_{j=1}^\infty \in \ell^\infty(\mathbb C)$
 and $\underline v(z) 
 =  
 (\chi_{\mathcal E_x}(s_j)
 e^{-z\ell (s_j)})_{j=1}^\infty
 \in \ell^1(\mathbb C)$, where 
 $\chi_{\mathcal E_x}$ denotes the characteristic function of the set 
 $\mathcal E_x = \{s \in \mathcal S \hbox{ : } i(s) =x\}$
 of saddle connections starting from the singularity $x \in \Sigma$.
 
% for $\mathcal F_x = \{ e \in \mathcal E \hbox{ : %}  t(i) = y\}$

\medskip
\begin{lemma}
 The function $\eta_X(z)$ is analytic for $Re(z) > h$ and has a meromorphic extension to $Re(z) > 0$.  Moreover $\eta_X(z)$ has a simple pole at $z=h$ and no other 
 poles on $Re(z) = h$.
\end{lemma}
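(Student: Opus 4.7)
The plan is to mirror the argument developed in Sections 3--5 for infinite graphs, now applied to the matrix $M_z$ associated with the translation surface $X$. The key observation is that the expression (6.1) writes $\eta_X(z)$ as the sum of an obviously analytic term on $Re(z) > 0$ (namely $\frac{2\pi}{z^2}(k(x)+1)$) plus the term $\frac{2\pi}{z^2}\underline{v}(z)\cdot(I-\widehat{M_z})^{-1}\underline{u}$, which is structurally identical to the expression (4.1) that governs $\eta_{\mathcal{G}}(z)$. Since $X$ satisfies (T1), (T2), (T3)---the exact analogues of (H1), (H2), (H3), as verified in the discussion above---the operator-theoretic machinery of Sections 3 and 4 transfers essentially verbatim.

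First, I would check the input conditions to that machinery: by (T1), $\underline v(z) \in \ell^1(\mathbb{C})$ for $Re(z) > 0$; and $\underline u = (k(t(s_j)))_j \in \ell^\infty(\mathbb{C})$ because $\Sigma$ is finite and thus the cone-excesses $k(x_i)$ are uniformly bounded. Analyticity of $\eta_X(z)$ on $Re(z) > h$ then follows from the absolute convergence of $\int_0^\infty e^{-zR}\, dV(x,R)$, using the crude bound $V(x,R) \leq (\mathrm{const}) R^2 \cdot \#\pi(x,R)$ together with Definition \ref{voltrans}.

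Next, for the meromorphic extension to $Re(z) > 0$, I would invoke the Hofbauer--Keller-type truncation argument of Section 3 to produce the finite matrix $W_z = A_z + B_z(I - D_z)^{-1}C_z$. By (T1) the tail operator norm $\|\widehat{D_z}\|$ can be made strictly less than $1$, uniformly on $Re(z) \geq \epsilon$ for arbitrarily small $\epsilon > 0$, by choosing the truncation size $k$ large enough. The analogue of Lemma \ref{operator} then yields that $(I-\widehat{M_z})^{-1}$, and hence $\eta_X(z)$ via (6.1), extends meromorphically to $Re(z) > 0$ with poles among the zeros of $\det(I - W_z)$.

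Finally, for the pole structure on the line $Re(z) = h$: the analogue of Lemma \ref{lemnonneg} gives that $W_\sigma$ is non-negative and irreducible for real $\sigma > 0$ (irreducibility from (T2)), with simple maximal positive eigenvalue $\rho(\sigma)$ strictly decreasing in $\sigma$. Repeating the argument of Proposition 4.4 shows that $\rho(h) = 1$ is a simple zero of $\det(I - W_z)$, producing a simple pole at $z=h$; and repeating the Wielandt argument of Proposition 4.5, any other pole at $h+it$ on the critical line would force every closed saddle-connection path $c$ to satisfy $\ell(c) \in (2\pi/t)\mathbb{N}$, contradicting (T3). The main subtlety to verify is that the pole at $h$ is not killed by cancellation with the weight vectors in (6.1); this is rescued by strict positivity of the Perron left and right eigenvectors together with the fact that $k(x_i) \geq 1$ at every singularity (so every entry of $\underline u$ is $\geq 1$), ensuring the residue at $h$ is indeed nonzero.
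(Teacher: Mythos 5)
Your proposal is correct and follows essentially the same route as the paper, which disposes of this lemma in a single sentence by pointing back to the machinery of Sections~3--4 with (T1)--(T3) in place of (H1)--(H3); you have simply unpacked that reference. You go slightly beyond the paper in explicitly flagging the potential cancellation of the residue at $z=h$ and arguing it away via positivity of the Perron eigenvectors and $k(x_i)\geq 1$, a point the paper leaves implicit (it just asserts $\phi(h)\neq 0$ in the graph case).
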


\begin{proof}
We can apply the analysis of 
$( I - \widehat{M_z})^{-1}$ in Section 4 to (6.1), where 
we use hypotheses (T1)-(T3) in place of (H1)-(H3).
\end{proof}

We can now apply Theorem \ref{tauberian}
to deduce the following.

\begin{thm}\label{trans}  There exists a $C > 0$ such that $V(x,R)
\sim C e^{hR}$ as $R \to +\infty$, i.e., 
$$
\lim_{R \to +\infty}
\frac{V(x,R)}{e^{h R}} = C.
%\footnote{Typically $C = C(x)$ will depend %on the choice of $x$.}
$$

\end{thm}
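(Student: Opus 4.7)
The plan is to apply the Ikehara--Wiener Tauberian theorem (Theorem \ref{tauberian}) directly to $A(R) := V(x,R)$ with $a = h$, so essentially all of the work is to verify the four hypotheses of that theorem.

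First I would note that $V(x,R) = \hbox{\rm Vol}_{\widetilde X}(B(\tilde x, R))$ is manifestly non-decreasing in $R$ and non-negative, since larger balls in $\widetilde X$ contain smaller ones. By Definition \ref{voltrans}, and since $V(x,R)$ grows at exponential rate $h$ (at most), the Laplace--Stieltjes integral defining $\eta_X(z)$ converges absolutely and is analytic on $Re(z) > h$, giving hypothesis (1) of Theorem \ref{tauberian}. Hypotheses (2) and (4) follow immediately from the lemma preceding the theorem, which asserts a meromorphic extension of $\eta_X(z)$ to $Re(z) > 0$ with no poles on the line $Re(z) = h$ other than at $z = h$ itself.

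The remaining point is hypothesis (3), namely that $h$ is a \emph{simple} pole with positive residue. I would read this off the explicit formula (6.1): the factor $\frac{2\pi}{z^2}$ is holomorphic and non-zero near $z = h > 0$, so the poles of $\eta_X(z)$ come entirely from the resolvent $(I - \widehat{M_z})^{-1}$ applied in the contracted expression $\underline v(z) \cdot (I - \widehat{M_z})^{-1} \underline u$. But the analysis of $(I - \widehat{M_z})^{-1}$ carried out in Sections 3--4 (using (H1)--(H3), replaced here by (T1)--(T3), which the paper has already shown to hold for all translation surfaces) expresses $\eta_X(z)$ in the form $\phi(z)/\det(I - W_z)$, and the Perron--Frobenius argument of Lemma \ref{lemnonneg} together with the simplicity proposition shown for $\eta_{\mathcal G}(z)$ give that $h$ is a simple zero of $\det(I - W_z)$ with $\phi(h) \neq 0$. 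The positivity of $C = \lim_{z \to h}(z-h) \eta_X(z)$ comes from the positivity of the residue of $1/\det(I - W_z)$ (via $\rho'(h) < 0$) combined with positivity of $\underline u$, $\underline v(h)$, and the $\frac{2\pi}{h^2}$ prefactor.

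Having verified the four hypotheses, the Tauberian theorem yields $V(x,R) \sim C e^{hR}$ as $R \to \infty$, which is the desired statement. The main ``obstacle'' is really bookkeeping: confirming that the complex-analytic machinery developed for the graph setting transfers verbatim once (T1)--(T3) replace (H1)--(H3), and that the $\frac{2\pi}{z^2}$ prefactor and vector contractions in (6.1) do not create or cancel the pole at $h$ nor introduce extra poles on $Re(z) = h$; both are straightforward since $\frac{2\pi}{h^2} > 0$ and the contraction by positive vectors $\underline u, \underline v(h)$ preserves simplicity and positivity of the residue.
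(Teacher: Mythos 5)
Your proposal follows essentially the same route as the paper: identify $\eta_X(z)=\int_0^\infty e^{-zR}\,dV(x,R)$, use formula (6.1) to import the resolvent analysis of Sections 3--4 (with (T1)--(T3) in place of (H1)--(H3)) to get the meromorphic extension, the simple pole at $h$ with positive residue, and the absence of other poles on $Re(z)=h$, and then invoke Theorem \ref{tauberian}. The paper compresses all of this into a one-line appeal to the preceding lemma, but your expanded verification of the four Tauberian hypotheses matches it step for step.
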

Typically $C = C(x)$ will depend on the choice of $x$.

There is a  closely related result
for counting 
the number of geodesic arcs $N_X(x, y,R)$
starting  at $x \in \Sigma$ and finishing at $y \in \Sigma$.

\begin{prop}\label{points}
There exists a $D > 0$ such that 
$N_X(x,y,R)
\sim D e^{hR}$ as $R \to +\infty$, i.e., 
$$
\lim_{R \to +\infty}
\frac{N_X(x,y,R)}{e^{h R}} = D.
$$
\end{prop}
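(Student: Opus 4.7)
The plan is to imitate the proof of Theorem \ref{trans} almost verbatim, replacing the volume generating function $\eta_X(z)$ by a two-endpoint path-counting generating function; all the spectral analysis of Sections 3 and 4 then transfers unchanged, and the Ikehara--Wiener theorem supplies the asymptotic. The first step is to recognise that any distance-minimising geodesic arc from $x\in\Sigma$ to $y\in\Sigma$ must be a concatenation of oriented saddle connections whose consecutive pairs satisfy the angle condition, so that $N_X(x,y,R)$ is exactly the number of saddle connection paths $p$ with $i(p)=x$, $t(p)=y$ and $\ell(p)\le R$.

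With this identification in hand, I would form
$$
\eta_{x,y}(z)\;:=\;\int_0^\infty e^{-zR}\,dN_X(x,y,R)\;=\;\sum_{\substack{p:\,i(p)=x\\ t(p)=y}}e^{-z\ell(p)}
$$
and, arguing exactly as in (4.1), expand it through the resolvent of $\widehat{M_z}$:
$$
\eta_{x,y}(z)\;=\;\underline w_x(z)\cdot(I-\widehat{M_z})^{-1}\,\underline u_y,
$$
where $\underline w_x(z)\in\ell^1(\mathbb C)$ has $j$-th coordinate $\chi_{\mathcal E_x}(s_j)e^{-z\ell(s_j)}$ and $\underline u_y\in\ell^\infty(\mathbb C)$ is the characteristic vector of the saddle connections terminating at $y$. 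By (T1) this is well-defined for $\Re(z)>0$, and absolutely convergent (hence analytic) for $\Re(z)>h$.

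Then I would invoke Sections 3 and 4 essentially without change. Lemma \ref{operator} and identity (3.4) give a meromorphic extension of $\eta_{x,y}(z)$ to $\Re(z)>0$ whose poles occur only where $\det(I-W_z)=0$; Lemma \ref{lemnonneg} and the Perron--Frobenius argument place a simple pole at $z=h$; and the Wielandt-type argument, combined with hypothesis (T3) established earlier, rules out any other poles on the line $\Re(z)=h$. Applying Theorem \ref{tauberian} with $a=h$ and $A(R)=N_X(x,y,R)$ then yields the asserted asymptotic with some $D\ge 0$. The only place where genuine care is required -- and what I expect to be the main obstacle -- is checking that the residue at $z=h$ is strictly positive rather than merely non-negative. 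Concretely, this reduces to choosing the truncation parameter $k$ in Section 3 large enough that both $x$ and $y$ occur as endpoints of saddle connections indexed by $\{1,\dots,k\}$, so that $\underline w_x(h)$ and $\underline u_y$ have strictly positive inner product with the (strictly positive) Perron--Frobenius left and right eigenvectors of $W_h$; irreducibility of $W_\sigma$ established in Lemma \ref{lemnonneg} together with (T2) makes this possible.
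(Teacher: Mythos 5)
Your proposal is correct and mirrors the paper's own proof: the paper likewise defines $\eta_N(z)=\int_0^\infty e^{-zR}\,dN_X(x,y,R)=\underline v(z)\cdot(I-\widehat{M_z})^{-1}\underline w$ with $\underline w$ the characteristic vector of saddle connections ending at $y$, and then appeals to the resolvent analysis of Sections 3--4 and the Tauberian theorem. Your closing observation about ensuring strict positivity of the residue at $z=h$ (by taking the truncation $k$ large enough to capture saddle connections incident to both $x$ and $y$, together with irreducibility and (T2)) is a worthwhile detail that the paper's terse proof leaves implicit.
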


\begin{proof}
The proof simply requires replacing the function $\eta_X(z)$ by the function
$$
\eta_N(z) = \int_0^\infty e^{-zR} dN_X(x,R) =  
 {\underline v}(z) \cdot( I - \widehat{M_z})^{-1}{\underline w},$$
where ${\underline w} = (\chi_{\mathcal F}(s_i))_{i=1}^\infty$, with $\chi_{\mathcal F}(s)$ denoting the characteristic function 
for the set $\mathcal F = \{s \in \mathcal{S} \hbox{ : } t(s_i)=y\}$ of saddle connections ending at the singularity $y$
and $u(z)$ was defined after equation (6.1).  Again the properties of $(I-\widehat{M_z})^{-1}$ allow one to apply Theorem
\ref{tauberian} to deduce the result.
\end{proof}

\begin{rem}
We conclude with some final remarks.
\begin{enumerate}
\item
It is not necessary for the ball in Theorem \ref{trans} to be centered at a singularity.
Let $y\in X - \Sigma$ and let 
$G$ be the set of geodesics $g$, from $y$ to a singularity, such that $g$ has length $\ell(g)$. Order $G$ by non-decreasing lengths.
 We define a matrix $P$
where 
\begin{enumerate}
\item
the rows are indexed by such geodesics $g$ and the columns are indexed by the oriented saddle connections $s$; 
\item
the non-zero entries correspond to pairs
$g$, $s$ such that:
\begin{enumerate}
\item The singularity $t(g)$ at the end of $g$ is the same as that $i(s)$
at the  start of the saddle connection $s$; and
\item The geodesic $g$ and saddle connection $s$ have an angle of at least $\pi$ between them.
\end{enumerate}
\item The non-zero entries are  $P_z(g,s) = e^{-z \ell (s)}$.
\end{enumerate}
 One can then modify the complex function 
 to
$\eta_X(z) =  \frac{2\pi}{z^2} (k(x)+1)
+ \frac{2\pi}{z^2} 
 {\underline v_p(z)} \cdot \widehat{P_z} ( I - \widehat{M_z})^{-1}\underline u$, where $\underline v_p(z)=(
 e^{-z\ell (g)})_{g\in G}
 \in \ell^1(\mathbb C)$ and then continue the proof as in 
 Theorem \ref{trans}.
%\item 
%In order to prove the parallel result for counting perodic trajectories, we would %need to replace $\eta(s)$ by a zeta function (see \cite{Parry}).  
%This problem has been studied by Eskin and Rafi.
\item
Theorem \ref{trans} also follows as a corollary of Theorem \ref{points} by using a simple approximation argument.
In particular, this shows that $C = D \int_0^\infty e^{-u} u^2 du$.
\item
Let 
$L(x,R)$ be the total circumference of a circle centred at $x$ and whose radius is a geodesic of length $R$.
The same approach as in the proof of Theorem \ref{trans} (or an approximation argument as in item 2  would give an asymptotic formula of the form $L(x,R) \sim E e^{hR}$, as $R \to +\infty$.
\item Eskin and Rafi have announced a closely related asymptotic result 
to Theorem \ref{trans}
for 
closed geodesics on $X$.  By studying zeta functions $\zeta_X(z)$ instead of eta functions $\eta_X(z)$ they show that the number of closed geodesics of length at
 most $R>0$ is asymptotic to $e^{hR}/hR$
 as $R \to +\infty$.

%The corresponding complex function would %then be  
%$\eta(s)= \frac{2\pi}{s^2} (k(x)+1)
%+ \frac{2\pi}{s^2} 
% {\underline v(s) \left( I - M_s
%\right)^{-1}}\underline u(s)$.

% Moreover, the  circles will equi-%distribute on the surface in the weak %star %topology to a measure which is %absolutely %continuous, but not equal to %Lebesgue %measure. 
\end{enumerate}
\end{rem}

\end{document}